\def\b{\mathbb }
\def\cal{\mathcal }
\theoremstyle{plain}
\newtheorem{theorem}{Theorem}
\newtheorem{lemma}{Lemma}
\newtheorem{corollary}{Corollary}
\newtheorem{proposition}{Proposition}
\newtheorem*{theorem A}{Theorem A}
\newtheorem*{theorem B}{Theorem B}
\theoremstyle{remark}
\newtheorem{remark}{Remark}
\newtheorem*{remarkex}{Example}
\theoremstyle{definition}
\title[Beta distributions and Sonine integrals]
{Beta distributions and Sonine integrals for Bessel functions on symmetric cones}
\author{Margit R\"osler} 
\address{Institut f\"ur Mathematik, Universit\"at Paderborn, Warburger Str. 100,
D-33098 Paderborn, Germany}
\email{roesler@math.upb.de}
\author{Michael Voit}
\address{Fakult\"at Mathematik, Technische Universit\"at Dortmund,
          Vogelpothsweg 87,
          D-44221 Dortmund, Germany}
\email{michael.voit@math.tu-dortmund.de}
\subjclass[2010]{Primary 33C70; Secondary 43A85, 33C80, 17C50}
\keywords{ Symmetric cones, Bessel functions, Sonine integral formula, Beta distributions,
 Riesz distributions, Wallach set}
\begin{document}

\maketitle

\begin{abstract}

There exist several multivariate extensions of the classical Sonine
integral representation for Bessel functions of some  index $\mu+ \nu$  
with respect to such functions of lower index $\mu.$  For Bessel functions on matrix cones, Sonine formulas involve beta densities $\beta_{\mu,\nu}$ on the cone and trace already back to Herz. The Sonine representations known so far on symmetric cones are restricted to continuous ranges  $\Re\mu, \Re \nu > \mu_0$, where the involved beta densities are probability measures and the limiting index $\mu_0\geq 0$ depends on the rank of the cone. It is zero only in the one-dimensional case, but larger than zero in all multivariate cases. 

In this paper, we study the extension of Sonine formulas for Bessel functions on symmetric cones to values of $\nu$ below the critical limit $\mu_0$. This is achieved by an analytic 
extension of the involved beta measures as tempered distributions. 
Following  recent ideas by A. Sokal for Riesz distributions on symmetric cones,
we analyze for which indices the obtained beta distributions are
still measures. At the same time, we characterize the 
indices for which a Sonine formula between the related Bessel functions exists.
As for Riesz distributions, there occur gaps in the admissible range of 
indices which are determined by the so-called Wallach set.

\end{abstract}

\section{Introduction}

Consider the one-variable normalized Bessel functions
\[
j_\alpha(z):=\, _0F_1(\alpha+1;-z^2/4) \quad\quad 
(\alpha\in\mathbb C\setminus\{-1,-2,\ldots\}),
\]
which  for $\alpha>-1/2$ have the well-known  Laplace integral representation
\begin{equation}\label{intrep-1-dim-allg-exp}
j_\alpha(z):=\frac{\Gamma(\alpha+1)}{\Gamma(\alpha+1/2)\Gamma(1/2)}
\int_{-1}^1 e^{izx}(1-x^2)^{\alpha-1/2} \> dx \quad\quad(z\in\mathbb C).
\end{equation}

For half integers $\alpha=p/2-1$
 with $p\geq 2$, 
 formula \eqref{intrep-1-dim-allg-exp}
 may be regarded as a Harish-Chandra integral representation 
for the spherical functions of the Euclidean space $\mathbb R^p$
with $SO(p)$-action.
It is also well-known that for $\alpha>-1$ and $\beta>0$, $j_{\alpha+\beta}$ can be expressed
 in terms of $j_{\alpha}$ as a Sonine integral (formula (3.4) in Askey \cite{A}):
\begin{equation}\label{intrep-1-dim-allg}
j_{\alpha+\beta}(z)= 2\frac{\Gamma(\alpha+\beta+1)}{\Gamma(\alpha+1)\Gamma(\beta)}
\int_{0}^1j_{\alpha}(zx)  x^{2\alpha+1}(1-x^2)^{\beta-1}\> dx.
\end{equation}
This follows easily by power series expansion of both sides and is a particular case of classical integral representations for one-variable hypergeometric
functions.  
Notice that for $\beta=0$, formula \eqref{intrep-1-dim-allg} degenerates in a trivial way.
As $j_{-1/2}(z)=\cos z$,
 formula \eqref{intrep-1-dim-allg-exp}
is actually a special case of \eqref{intrep-1-dim-allg}.
For some background on these classical formulas we also refer to the monograph \cite{AAR}.

We now ask for which indices $\alpha, \beta\in \mathbb R$ with $\alpha>-1$ and $ \alpha + \beta >-1$ there actually exists
a Sonine integral representation
$$j_{\alpha + \beta}(z) = \int_0^\infty j_\alpha(zx) d\mu_{\alpha, \beta}(x)$$ 
with a positive measure $\mu_{\alpha,\beta}$. It is easily seen that
this is only possible  if $\beta \geq 0.$ Indeed, if such a representation
with $\beta<0$ would exist, we could combine it with
(\ref{intrep-1-dim-allg}) for the
parameter pairs $(\alpha+\beta, -\beta)$ instead of $(\alpha,\beta)$. This would
lead to a Sonine integral representation 
of $j_\alpha$ in terms of $j_\alpha$ with a measure different from the point measure $\delta_1$, which is
impossible
by the injectivity of the Hankel transform of bounded measures.
In particular, a 
 Laplace representation such as  (\ref{intrep-1-dim-allg-exp}) with a positive representing measure exists
precisely for $\alpha\ge-1/2$.

In this paper we study extensions
of Sonine-type integral representations
for Bessel functions of matrix argument and more generally, on Euclidean Jordan algebras
and the associated symmetric cones. The general Jordan algebra setting includes the Jordan algebras of Hermitian matrices over the (skew) fields $\mathbb R, \mathbb C$ or the quaternions $\mathbb H$ as important special cases. Bessel functions in this setting
trace back to the fundamental work of Herz \cite{H}, which was
motivated by questions in  number theory and multivariate statistics. For example, Bessel functions of matrix argument  occur naturally  in the explicit expression of non-central
Wishart distributions (\cite{Co}, \cite{Mu}). They 
are imbedded in a theory of hypergeometric functions on Euclidean Jordan algebras  which are defined as hypergeometric series in terms of so-called spherical polynomials.
Integral representations of Bessel functions play an important role in the analysis on symmetric cones and are closely related to  Laplace transforms. For details and a general background see \cite{FK, GR, H}. For various aspects concerning the rich harmonic analysis associated with Bessel functions on symmetric cones, we also refer to \cite{FT, Di, R2, RV3, Mo}. 

Let us now describe our results in more detail. In order to avoid abstract notation, we restrict in this introduction to the case where 
the underlying Jordan algebra  is the space $ V= H_q(\mathbb F)$ of $q\times q$ Hermitian
 matrices over $\mathbb F=  \mathbb R, \mathbb C$
or $\mathbb H.$ The (real) dimension of  $V$ is  $$ n=q+ \frac{d}{2}q(q-1) \text{ wih }\, d= dim_{\mathbb R}\mathbb F\in \{1,2,4\},$$ and 
 $V$ is associated with the symmetric cone $\Omega=\Omega_q(\mathbb F)$ of positive definite matrices over $\mathbb F$. 
 The Bessel functions on $V$ are defined by
$$ \mathcal J_\mu(x) := \- _0F_1(\mu; -x) = 
 \sum_{\lambda\geq 0} \frac{(-1)^{|\lambda|}}{(\mu)_\lambda |\lambda|!}
Z_\lambda(x),\quad x \in V.$$
Here the sum is over all partitions of length $q$,  $(\mu)_\lambda$ is a generalized Pochhammer symbol, and the $Z_\lambda$ are the (renormalized) 
spherical polynomials of $\Omega$,  see Section \ref{sec2} for 
the details. 

For 
indices $\mu,\nu\in\mathbb C$ with
$\Re \mu, \Re \nu > n/q-1=:\mu_0$, the associated beta measure on $\Omega$ is defined by

\begin{equation}\label{betameasure}
d\beta_{\mu,\nu}(x):= \frac{1}{ B_\Omega(\mu,\nu)}  \Delta(x)^{\mu-n/q}\Delta(I_q-x)^{\nu-n/q}
\cdot 1_{\Omega_e}(x)dx\end{equation}
where $B_\Omega(\mu,\nu)$ is the beta function associated with  $\Omega$,
$\Delta$ denotes the determinant polynomial on $V,$ and 
$\Omega_e = \{x\in \Omega: x< I_q\}.$ We also consider $\beta_{\mu, \nu}$ 
as a compactly supported measure on $\overline\Omega$ or on $V$. 
For real $\mu, \nu >\mu_0$, 
$\beta_{\mu,\nu}$ is a probability measure. 
It is known for $H_q(\mathbb R)$ and easy to see in the general Jordan setting (Theorem \ref{int-rep-classical}) that the Bessel functions $\mathcal J_\mu$  have 
the following integral representation of Sonine type generalizing the one-variable case \eqref{intrep-1-dim-allg}: 
For indices $\mu, \nu \in \mathbb C$ with $\Re\mu, \Re \nu > \mu_0$,
\begin{equation}\label{Sonine1} \mathcal J_{\mu + \nu}(r) = \int_{\Omega_e} \mathcal J_\mu(\sqrt r s\sqrt r) d\beta_{\mu,\nu}(s) \quad \text{ for all } r\in \Omega\,.\end{equation}
Notice that $\mu_0 = 0$ if $q=1$, but $\mu_0$ is larger than zero if $q>1$, and in this case formula \eqref{Sonine1}  is not available in the range $\Re\nu >0$. This is to some extent unexpected and makes the situation
more interesting in higher dimension than in the one-variable case. 

Let us mention at this point that there is a broad literature on beta probability distributions on matrix cones and their relevance in statistics, in particular in relation with Wishart distributions, 
see \cite{CL, Fa, Ko, Mu, OR} well as the survey \cite{D}. 
For some applications in mathematical
 physics and representation theory, see for example  the survey \cite{Ne} and references therein. To our knowledge, beta distributions have so far only rarely been considered for indices
 beyond the critical value $\mu_0$. References in this case are \cite{U} and \cite{Sr}, where certain discrete indices are considered for which the associated beta measures become singular.

 Our aim in this paper is to study the extendability of the  Sonine formula \eqref{Sonine1} to larger ranges of the index $\nu$. This  will be achieved by analytic extension (with respect to $\nu$) of the beta probability measures as distributions,  and a detailed analysis when these distributions are still measures. 
 
 Our method is motivated by the theory of Gindikin for Riesz
distributions associated with symmetric cones  (see \cite{G1, G2}, Chapter 7 of \cite{FK}, and
the recent simplifications in \cite{S}).
Let us recall the basic facts, again for the case $V= \mathbb H_q(\mathbb F).$  
For indices $\alpha\in \mathbb C$ with $\Re \alpha > \mu_0 = n/q-1$, the Riesz probability distributions $R_\alpha$ on $V$ are defined by 
$$ R_\alpha(\varphi) = \frac{1}{\Gamma_\Omega(\alpha)}\int_{\Omega} \varphi(x)\Delta(x)^{\alpha-n/q}\,dx$$
where $\Gamma_\Omega$ is the gamma function associated with $\Omega$ and
$\Delta$ denotes the determinant on $V.$ 
According to the results by Gindikin, the measures $R_\alpha$ 
have a (weakly) analytic extension to distributions $R_\alpha\in \mathcal D^\prime (V)$ for all $\alpha \in \mathbb C$. This means that the mapping $\alpha \mapsto R_\alpha(\varphi) $ is analytic on $\mathbb C$ for each $\varphi \in \mathcal D(V).$ The distributions $R_\alpha$  are tempered and their support is contained in the closed cone $\overline \Omega.$
Moreover, $R_\alpha$ is a positive measure exactly if $\alpha $ belongs to the Wallach set
\begin{equation}\label{Wallach} \big\{0,\frac{d}{2},\ldots,(q-1)\frac{d}{2}= \mu_0\}\, \cup\, ]\mu_0,\infty[\,.\end{equation}
A simple proof for the necessity of this condition is given in \cite{S}.

We consider the beta measures 
$\beta_{\mu,\nu} $ in \eqref{betameasure} as compactly supported distributions on $V$ of order zero. Their extension to a larger range of the index $\nu$ is more involved than in the Riesz case. Indeed, the range of extension we are able to obtain depends on  $\Re \mu$.   To become precise, 
consider the open half planes
$$E_k:=\{\nu\in   \mathbb C:\> \Re\nu> \mu_0-k\}, \quad k\in \mathbb N_0$$
where $E_0\subset E_k \subset E_{k+1}$.
It is easily checked that for fixed $\mu\in E_0$, the mapping
\begin{equation}\label{dist0} E_0\,\to\,\cal D^{\prime}(V),
\quad \nu\mapsto 
\beta_{\mu,\nu}\end{equation}
is (weakly) analytic, i.e., $\nu\mapsto \beta_{\mu,\nu}(\varphi) $ is analytic for each test function $\varphi\in \mathcal D(V)$.
Recall that compactly supported distributions on $V$ extend 
continuously to $\mathcal E(V),$ the space $C^\infty(V)$ with its usual locally convex topology. 
In Theorem \ref{distribution-fortsetzung}, we prove:

\begin{theorem A}
For $k\in \mathbb N_0$ and $\mu\in \mathbb C$ with  $\Re\mu > \mu_0 +kq +1$, the mapping \eqref{dist0}
has a unique analytic extension from $E_0$ to $E_k$ with values in $\mathcal D^\prime(V)$. The distributions 
$\beta_{\mu, \nu}$ obtained in this way are compactly supported with support contained in $\overline{\Omega_e}$. 
Moreover, the Bessel functions 
 $\mathcal J_{\mu}$ and $\mathcal J_{\mu+ \nu}$ associated with $\Omega$ are  related by the  Sonine formula
\begin{equation}
\mathcal J_{\mu+\nu}(r)=\beta_{\mu,\nu} (\mathcal J_\mu^{r}) \quad \text{for all }\, r\in \Omega,\end{equation}
where $\mathcal J_\mu^r(x) = \mathcal J_\mu(\sqrt r x \sqrt r) \in \mathcal E(V).$ 
\end{theorem A}

We next  ask when the distributions $\beta_{\mu,\nu}$ are actually complex Radon measures or even probability measures.
The latter requires that $\mu, \nu\in \mathbb R.$  The following result is contained in Corollary \ref{summary1}:

\begin{theorem B}\label{main-intro}
 Let $\mathbb F=\mathbb R, \mathbb C$, $k\in \mathbb N$,  and 
$\mu\in \mathbb R$ with $\mu > \mu_0 + kq + 3/2.$  Then for $\nu\in E_k$, the following statements are
equivalent:
\begin{enumerate}
 \item[\rm{(1)}] $\nu$ is contained in the Wallach set \eqref{Wallach}.
\item[\rm{(2)}] The distribution $\beta_{\mu,\nu}$ is a  positive measure.
\item[\rm{(3)}] There exists a  probability measure $\beta\in M^1(\overline\Omega)$ such that 
$$ \mathcal J_{\mu + \nu}(r) = \int_{\overline\Omega} \mathcal J_\mu(\sqrt r s\sqrt r) d\beta(s) \quad \text{ for all } r\in \overline\Omega\,.$$
\end{enumerate}
In this case, the measure  $\beta\in M^1(\overline\Omega)$ in $(3)$ is unique, and  $\beta=\beta_{\mu, \nu}$. 
\end{theorem B}

We shall prove this result, as well as a counterpart for complex measures, actually in the more general setting 
of symmetric cones with Peirce constant $d=1$ or $2$.
This also includes the Lorentz cones in $\mathbb R\times \mathbb R^2$ and $\mathbb R\times \mathbb R^3.$ 
Without restriction on the Peirce constant $d$, our results (contained in Theorem \ref{not1}) are somewhat less
complete, but still give interesting restrictions on the indices which are necessary to assure that $\beta_{\mu,\nu}$ is a measure. This in particular concerns the case of 
quaternionic matrix cones.

We finally mention that the spherical polynomials and thus also the Bessel functions on Euclidean Jordan algebras depend only on the eigenvalues of
their argument. Considered
as functions of the spectra, the spherical polynomials can be identified with Jack polynomials whose index depends on $d$; this was first observed by Macdonald \cite{Ma1}.
There is a natural theory of hypergeometric expansions 
in terms of Jack polynomials (see \cite{Ka, Ma}) which encompasses
the theory on symmetric cones and is closely related with rational Dunkl theory, c.f. \cite{R2} and Lemma \ref{Besselestim} below.  Riesz distributions in this setting are Selberg densities, and their
analytic extension and consequences for integral representations of Bessel functions of Dunkl type will be studied in a forthcoming paper.

The organization of this  paper is as follows: The next section gives a short survey about Bessel functions on Euclidean Jordan algebras.
In Section 3 we discuss several facts concerning the  beta measures $\beta_{\mu,\nu}$ and the Sonine formula for $\Re \mu, \Re \nu > \mu_0 $.
Section 4 contains the main results of this paper on the analytic extension of the beta measures  and their consequences for Sonine integral representations of the Bessel functions.

\section{Bessel functions on Euclidean Jordan algebras}\label{sec2}

In this section we present
 some basic facts and notions on symmetric cones and   associated Bessel functions.
 We illustrate the general notions by the important example of matrix cones.
 For a background on symmetric cones and Jordan algebras we refer to the monograph \cite{FK}.  

A real algebra $V$ of finite dimension $n$ is called a (real) Jordan algebra if its multiplication $(x,y) \mapsto x\cdot y = xy\,$ satisfies 
$$  xy = yx \, \text{ and }\,  x(x^2y) = x^2(xy) \quad \text{for all } x,y \in V.$$
A real Jordan algebra is called Euclidean, if it has an identity $e\in V$ and a scalar product  $(\,.\,\vert \,.\,)$ such that $\, ( xy\vert z)  = (y\vert  xz)$ for all $x,y,z\in V.$ 
 It is called simple if it contains no non-trivial ideals. Let  $V$ be a Euclidean Jordan algebra.
Then the topological interior $\Omega$ of the set $\{x^2: x\in V\}$  is a symmetric cone. We recall that 
a symmetric cone 
$\Omega$ in a Euclidean vector space $V$ is  an open cone $\Omega\subseteq V$ which is proper (i.e. $\overline \Omega\cap -\overline\Omega = \{0\}$), 
 self-dual and homogeneous in the sense that the automorphism group
 of $\Omega$ acts in a transitive way. 
Let $G$ denote the identity component of this automorphism group and $K= G\cap O(V)$. Then already $G$ is transitive on $\Omega$, and there exist points $e\in \Omega$ 
such that  $K$ is the stabilizer of $e$ in $G$. Thus $\Omega\cong G/K$, which is a Riemannian symmetric space. With $e$ fixed as above,
there is a natural product in $V$ for which  $V$ becomes a 
Euclidean Jordan algebra with identity element $e$ and such that
$\overline\Omega = \{x^2: x\in V\}$ (See Thm. III.3.1.of \cite{FK}). 
 Every symmetric cone is a product of irreducible ones, and in the above way, the simple Euclidean Jordan algebras correspond to the 
irreducible symmetric cones. 

\begin{remarkex}

Let $\mathbb F$ be  one of the (skew) fields $\mathbb R, \mathbb C$
or  $\mathbb H$ with real dimension $d=1,2$ or $4$ respectively.
The usual conjugation in $\mathbb F$  is denoted by $t\mapsto \overline t$ and the real part of $t\in \mathbb F$ by $\mathfrak R t = \frac{1}{2}(t + \overline t).$
Let
\[H_q(\mathbb F) := \{x\in M_{q}(\mathbb F): x= x^*\}\]
be the space of Hermitian $q\times q$-matrices over $\mathbb F$, where $\,x^* =
\overline x^t.$ 
We consider $H_q(\mathbb F)$ as a  Euclidean vector space with  scalar product
 $\langle x, y\rangle = \mathfrak R \,\text{Tr}(xy)$,   where  $\,\text{Tr}(x) = \sum_{i=1}^q x_{ii}$ denotes the usual trace. With this scalar product and the Jordan product $x\cdot y= \frac{1}{2}(xy+yx)$, the space $\mathbb H_q(\mathbb F)$ becomes a simple Euclidean Jordan algebra
 with identity $e= I_q$.  
 The associated symmetric cone is given by 
 $$ \Omega_q(\mathbb F) =  \{x\in H_q(\mathbb F): 
 x \text{  positive definite}\}.$$ 
The pairs $(G,K)$ are in this case  $(GL_q^+(\mathbb R), SO_q(\mathbb R)),$ $(GL_q(\mathbb C), U_q(\mathbb C))$ and $(GL_q(\mathbb H),$ $ U_q(\mathbb H)),$ respectively, where the action of
$G$  on $\Omega_q(\mathbb F)$ is given by $r \mapsto grg^*$. Notice that this reduces to conjugation when restricted to $K$. 

\end{remarkex}

Let now $V$ be a simple Euclidean Jordan algebra and $\Omega$ the associated symmetric cone. 
A Jordan frame in $V$ is a complete set $c_1, \ldots, c_q\in V$ of orthogonal primitive idempotents, i.e.
$$ c_i^2 = c_i, \, c_ic_j = 0 \text{ if } i\not= j, \, c_1 + \ldots c_q = e.$$ 
The group $K$ acts transitively on the set of Jordan frames, and 
their common cardinality $q$ is called the rank of $V$ (or $\Omega$).
The rank of $V$ is related to its real dimension $n$  via
$$ n= q + \frac{d}{2}q(q-1),$$
where $d$ is the so-called Peirce constant, see p.71 of \cite{FK}. 
Each $x\in V$ admits a decomposition $x= k\sum_{i=1}^n \xi_i c_i$
with $k\in K$ and unique real numbers $\xi_1\geq \ldots \geq
\xi_q$ which are called the eigenvalues of $x$ (Sect. VI.2. of \cite{FK}).
The Jordan trace and determinant of $x$ are defined by 
$$\text{tr}(x) = \sum\xi_i, \quad \Delta(x) = \prod \xi_i\,.$$
Both functions are $K$-invariant.
 
 \begin{remarkex}
In the Jordan algebras $H_q(\b F)$, a natural Jordan frame consists
of the matrices $c_i =E_{ii}, \, 1\leq i \leq q$ (having entry $1$ in position $(i,i)$ and $0$ else). The eigenvalues of $x\in \mathbb H_q(\mathbb F)$ are  the usual (right) eigenvalues, and
$\Delta$ coincides with
the usual determinant 
 if $\b F=\b R$ or $\b C$, while for
$\b F=\b H$ it is given by the so-called Moore determinant, see \cite{Asl}. 
\end{remarkex}

The simple Euclidean Jordan algebras are classified. 
Up to isomorphism, there are the series $H_q(\mathbb F)$  with $\mathbb F= \mathbb R, \mathbb C, \mathbb H$, the exceptional Jordan algebra $H_3(\mathbb O)$ over the octonions,  as well as the Jordan algebras
$V=\b R\times \b R^{q-1}, \, q\geq 3,$ with Jordan product
$\,(\lambda,u)\cdot (\mu,v) = (\lambda\mu + \langle  u, v\rangle, \lambda v + \mu u)$, where 
$\langle\,.\,,\,.\,\rangle$ denotes the usual Euclidean 
scalar product on $\b R^{q-1}$. In this case, $\Omega$ is the Lorentz cone
\[\Lambda_q = \{ (\lambda,u) \in \b R\times\b R^{q-1}: 
\lambda^2- \langle u, u\rangle >0, \lambda >0\}.\]
The following table summarizes these Jordan algebras and their structure data.
\medskip

\begin{center}
\renewcommand{\arraystretch}{1.4}
\begin{tabular}{|c|c|c|c|c|}\hline
 $V$  &  $\Omega$ & rank & $d$ & $n=\text{dim V} $  \\ \hline\hline
 $H_q(\mathbb R)$   & $ \Omega_q(\mathbb R)$   &$q$ & $ 1$ & $\frac{1}{2}q(q+1) $       \\ \hline  
$H_q(\mathbb C)$   & $ \Omega_q(\mathbb C)$   &$q$ & $ 2$ & $q^2$       \\ \hline 
$H_q(\mathbb H)$   & $ \Omega_q(\mathbb H)$   &$q$ & $ 4$ & $q(2q-1)$       \\ \hline 
$H_3(\mathbb O)$ & $\Omega_3(\mathbb O)$ & $3$ & $8$ & $27$ \\ \hline
$ \mathbb R\times \mathbb R^{q-1}$ & $\Lambda_q$ & $2$ & $q-2$ &$q$ \\ \hline
                        
\end{tabular}
\end{center}

\bigskip

In this paper, we shall always assume that $V$ is a simple Euclidean Jordan algebra with associated symmetric cone $\Omega$   and that the scalar product of $V$ is given by $$ \langle x,y \rangle = \text{tr}(xy),$$ where $xy$ denotes the Jordan product. (This is no loss of generality, c.f. Section III.4 of \cite{FK}.)
We  need some further notation:
On $V$ we use the  partial orderings
$$ x<y \, :\, \Longleftrightarrow \, y-x\in \Omega \quad \text{and }\, x\leq y :\, \Longleftrightarrow \, y-x\in \overline\Omega.$$
The quadratic representation $P$ of $V$ is defined by 
$$ P(x):= 2 L(x)^2 - L(x^2), \,\, x\in V,$$
where $L(x)\in End(V)$ denotes the left multiplication by $x$ on $V$, i.e. $L(x)y = xy.$ 
For the Jordan algebras $H_q(\mathbb F)$, the quadratic representation is given by 
$$ P(x)y = xyx$$
where on the right side, the product is the usual matrix product (\cite{FK}, Section II.3). An element $x\in V$ is invertible in $V$ if and only if $P(x)$ is invertible, and in this case $P(x^{-1}) = P(x^{-1}).$ 
We finally mention an important invariance property:
Let $r,s \in \Omega$. Then by Lemma XIV.1.2 of \cite{FK},  there exists $k\in K$ such that
\begin{equation}\label{quad} P(\sqrt r\,)s = kP(\sqrt s\,)r.\end{equation}

For normalizations we need the gamma and beta function associated with the cone $\Omega$ (\cite{FK}, Chapter VII.1). They are defined by
\begin{align*}
 \Gamma_\Omega(z) &= \int_{\Omega}e^{-\text{tr}(x)} \Delta(x)^{z-n/q}\> dx, \\
 B_\Omega(z,w) & = \int_{\Omega_e }  \Delta(x)^{z-n/q}\Delta(e-x)^{w-n/q} dx,
\end{align*}
where $dx $ is the Lebesgue measure on $V$ induced by the scalar product $\langle\, .\,,\,.\,\rangle$ and 
\[\Omega_e = \{x\in \Omega: x< e\}\]
Both integrals are absolutely convergent for all $z,w \in \mathbb C$ with $\Re z, \Re w > \mu_0,$
where 
\begin{equation}\label{mu0} \mu_0 := \frac{n}{q}-1 = \frac{d}{2}(q-1).\end{equation}
By Corollary VII.1.3 of \cite{FK}, $\Gamma_\Omega$ can be expressed in terms of the classical gamma function as  
\begin{equation}\label{gamma-mult} \Gamma_\Omega(z) = (2\pi)^{(n-q)/2}
\prod_{j=1}^q\Gamma\big(z-\frac{d}{2}(j-1)\big).\end{equation}
Moreover, 
$$ B_\Omega(z,w) = \frac{\Gamma_\Omega(z)\Gamma_\Omega(w)}{\Gamma_\Omega(z+w)},$$
see Theorem VII.1.7 of \cite{FK}.
Notice that $\Gamma_\Omega$ is meromorphic on $\mathbb C$ without zeros, and its set of poles is
\[ \bigl\{0, \frac{d}{2}, \ldots, (q-1)\frac{d}{2}= \mu_0\bigr\} \, - \mathbb N_0\,.\]

The basic functions for the harmonic analysis on the cone $\Omega$ and building blocks for related special functions are the
 so-called spherical polynomials. For their definition, 
 recall that a $q$-tuple $(\lambda_1, \ldots, \lambda_q) \in \mathbb N_0^q$ is called a partition if $\lambda_1 \geq \ldots \geq \lambda_q$. 
We shall write $\lambda\geq 0$ for short.  
 The spherical polynomials associated with $\Omega$ are indexed by partitions $\lambda = (\lambda_1, \ldots, \lambda_q)   \in \b N_0^q$ 
 and are defined by
\[ \Phi_\lambda(x) = \int_K \Delta_\lambda(kx) dk, \quad x\in V\]
where $dk$ is the normalized Haar measure on $K$ and $\Delta_\lambda$ is the generalized
power function on $V$,
\[ \Delta_\lambda(x) = \Delta_1(x)^{\lambda_1-\lambda_2}
\Delta_2(x)^{\lambda_2-\lambda_3} \cdot\ldots\cdot \Delta_q(x)^{\lambda_q}.\]
Here the $\Delta_i(x)$ are the principal minors of 
$\Delta(x)$, see Section VII.1 of \cite{FK} for details. For the 
matrix algebras $V=H_q(\mathbb F)$, the $\Delta_i(x)$ are just the usual principal minors.  The power function
$\Delta_\lambda$ is a homogeneous polynomial of degree $\, |\lambda| =
\lambda_1 + \ldots + \lambda_q $ which is  positive on $\Omega$.
For convenience, we shall work with renormalized spherical polynomials 
$Z_\lambda = c_\lambda \Phi_\lambda, $
where the constants $c_\lambda >0$ are 
such that
\begin{equation}\label{power-tr}
(\text{tr}\,x)^k \,=\, \sum_{|\lambda|=k} Z_\lambda(x)
\quad\quad\text{for}\quad
 k\in \mathbb N_0\,,
\end{equation}
see Section XI.5.~of \cite {FK}.
The $Z_\lambda$ are $K$-invariant and thus
 depend only on the eigenvalues of their argument. 
 In view of \eqref{quad},  $$
Z_\lambda\bigl (P(\sqrt r\,)s) = Z_\lambda\bigl (P(\sqrt s\,)r) \quad \text{ for all } r,s \in \Omega.$$

We mention that for each symmetric cone the associated spherical polynomials are
given in terms of 
Jack polynomials (\cite{St}).   More precisely, it was observed by Macdonald in \cite{Ma1} that for $x\in V$ with eigenvalues $\xi = (\xi_1, \ldots, \xi_q)\in
\mathbb R^q$,
\begin{equation}\label{identjack}Z_\lambda(x) =
C_\lambda^\alpha(\xi) \quad \text{with}\quad \alpha = \frac{2}{d}\end{equation}
where the $C_\lambda^\alpha$  are the  Jack polynomials of $q$ variables and index $\alpha>0$, normalized such that 
\begin{equation}\label{power-tr2} (\xi_1 + \ldots + \xi_q)^k = \sum_{ |\lambda|=k } C_\lambda^\alpha(\xi).\end{equation}
The $C_\lambda^\alpha$  are homogeneous of degree $|\lambda|$ and symmetric in
their arguments. 

For a simple Euclidean Jordan algebra with Peirce constant $d$,  the associated ($\mathcal J$-) Bessel functions  
are defined on $V^\mathbb C$ as 
\begin{equation}\label{power-j}
 \mathcal J_\mu(z) =
 \sum_{\lambda\geq 0} \frac{(-1)^{|\lambda|}}{(\mu)_\lambda \,|\lambda|!}
Z_\lambda(z),
\end{equation}
with the generalized Pochhammer symbol  
\[
(\mu)_\lambda := \,\prod_{j=1}^q
\bigl(\mu-\frac{d}{2}(j-1)\bigr)_{\lambda_j},\]c.f. \cite{FK}. Here it is assumed that $\mu \in \mathbb C$ with $(\mu)_\lambda \not=0$ for
all $\lambda \geq 0$, which is for example satisfied if $\Re \mu > \mu_0$.  The function $\mathcal J_\mu$ is analytic on $V^\mathbb C.$

In  the rank one case $q=1,$ we have
 $\Omega= ]0,\infty[$, $\,\mu_0 = 0,$  and the Bessel function $ \mathcal J_\mu$ is
independent of $d$ and satisfies
\[ \mathcal J_\mu\bigl(\frac{z^2}{4}\bigr) = j_{\mu-1}(z) \]
in the notation of the introduction.

For the matrix cones $\Omega_q(\mathbb F)$ it is well-known 
 (see e.g. \cite{R2} or \cite{RV3} for details) that 
for certain indices $\mu$, the associated Bessel functions $\mathcal J_\mu$ occur 
as spherical functions of flat symmetric spaces. In fact, fix some integer $p\geq q$ and denote by  $M_{p,q}= M_{p,q}(\mathbb F)$ the space of $p\times q$-matrices over $\mathbb F$. 
Consider the Gelfand pair 
$(M_{p,q}\rtimes U_p, U_p),$  where the unitary group $U_p:=U_p(\mathbb F)$ acts on $M_{p,q}$ by left multiplication. The double coset space
$M_{p,q}  \rtimes U_p//U_p$ 
may be naturally identified with the orbit space 
$M_{p,q}^{U_p}\,$ which is in turn homeomorphic with the closed cone 
$$ \overline{\Omega_q(\mathbb F)} = \{ x\in H_q(\mathbb F): x \text{ positive semidefinite}\}$$ 
via the mapping
$$ U_p.x \mapsto \sqrt{x^*x}\,.$$
Considered as functions on the cone $\overline{\Omega_q(\mathbb F)}$, the bounded spherical functions
of the Gelfand pair $(M_{p,q}\rtimes U_p, U_p)$ are precisely given by the Bessel functions 
\[\varphi_s^\mu(r) = \mathcal J_\mu\bigl(\frac{1}{4}rs^2r), \>\> s \in \overline{\Omega_q(\mathbb F)}\quad \text{with } \mu= pd/2.\]

 As spherical functions, these Bessel functions  have an integral representation of Harish-Chandra type. 
 Analytic continuation with respect to $\mu$ leads to the following integral representation, see 
 \cite{R2}, Section 3.3 as well as \cite{H} for $\mathbb F = \mathbb R$.

\begin{proposition}\label{int-rep-mat-bessel}
Let $d= \text{dim}_\mathbb R \mathbb F$ and  $\mu\in\mathbb C$ with 
$\Re\mu>d(q-1/2).$
 Then for all $x\in H_q(\mathbb F)$, the Bessel function of index $\mu$ associated with $H_q(\mathbb F)$ satisfies
\begin{equation}\label{int-rep-exp-matrix}
\mathcal J_\mu(x^2)=\frac{1}{\kappa_\mu}\int_{B_q(\mathbb F)} e^{-2i\langle v,
  x\rangle}\Delta(I-v^*v)^{\mu-1-d(q-1/2)}dv
\end{equation}
with 
$B_q(\mathbb F) =\{v\in M_q(\mathbb F):\> v^*v<I_q\}$, the scalar product $\langle x,y\rangle = \Re \,Tr (x^*y)$ on $M_q(\mathbb F)$  and   $$\kappa_\mu= \int_{B_q(\mathbb F)}  \Delta(I-v^*v)^{\mu-1-d(q-1/2)}dv.$$
\end{proposition}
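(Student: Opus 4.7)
The strategy is to reduce the claim to two ingredients: (i) the representation of $\mathcal J_\mu$ as a bounded spherical function for the discrete set of indices $\mu = pd/2$, $p\geq q$ integer, and (ii) analyticity of both sides of \eqref{int-rep-exp-matrix} in $\mu$ on the half-plane $\Re\mu > d(q-1/2)$.

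For integer $p\geq q$ and $\mu = pd/2$, the Bessel function $\mathcal J_\mu$ is a bounded spherical function of the Gelfand pair $(M_{p,q}(\mathbb F)\rtimes U_p, U_p)$, as recalled immediately before the proposition. The corresponding Euclidean-motion-group character integral, evaluated at the canonical base point $\sigma_0 = \binom{I_q}{0}\in M_{p,q}(\mathbb F)$, reads
\[ \mathcal J_\mu(x^2) = \int_{U_p} e^{-2i\langle x, \pi_q(u)\rangle}\,du, \]
where $\pi_q:U_p \to M_q(\mathbb F)$ is the projection onto the top $q\times q$ block and $\langle\cdot,\cdot\rangle = \Re\,\mathrm{Tr}(\cdot^*\cdot)$. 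A classical computation via the polar decomposition on the Stiefel manifold $V_{p,q}(\mathbb F) = U_p/U_{p-q}$ shows that the pushforward of Haar measure on $U_p$ under $\pi_q$ is supported on $\overline{B_q(\mathbb F)}$ and has density proportional to $\Delta(I_q - v^*v)^{pd/2 - 1 - d(q-1/2)}$; compare \cite{FK}, Chapter XVI, or \cite{Mu}. Substituting establishes \eqref{int-rep-exp-matrix} at all $\mu = pd/2$ with $p\geq q$ integer, and identifies $\kappa_\mu$ as the normalization constant.

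To upgrade the equality to all $\mu$ with $\Re\mu > d(q-1/2)$, I observe that both sides depend analytically on $\mu$ there: the left side by \eqref{power-j}, and the right side because the integrand varies analytically with $\mu$ while the integral converges uniformly on compact subsets of the half-plane. A Carlson-type argument, after suitable exponential-type bounds in vertical strips, extends equality from the arithmetic progression $\{pd/2 : p\geq q\}$ to the whole half-plane. A cleaner alternative that bypasses Carlson is to expand $e^{-2i\langle v, x\rangle}$ as a power series, use the $v\mapsto -v$ symmetry to kill odd powers, apply the $K$-spherical decomposition of $\langle v,x\rangle^{2k}$ in terms of the $Z_\lambda$, and evaluate the $K$-invariant beta moments
\[ \int_{B_q(\mathbb F)} Z_\lambda(v^*v)\,\Delta(I_q - v^*v)^{\mu - 1 - d(q-1/2)}\,dv \]
in closed form via the change of variables $w = v^*v$ and the classical beta integral on $\Omega_q(\mathbb F)$. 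Matching against \eqref{power-j} term by term then handles every $\mu$ in the half-plane simultaneously.

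The main obstacle will be the bookkeeping in this last step: one must verify that the combinatorial coefficients coming from the spherical expansion of $\langle v,x\rangle^{2k}$ assemble into the generalized Pochhammer symbol $(\mu)_\lambda$ appearing in \eqref{power-j}, and that the Jacobian of $v\mapsto v^*v$ on $B_q(\mathbb F)$ restores exactly the power of $\Delta(w)$ needed to reduce to the standard $\Omega_e$ beta integral. The spherical-function route in the first step avoids this combinatorial grind, so in practice the two approaches complement each other: the group-theoretic route supplies the identity at enough indices, and the hypergeometric route supplies analyticity in $\mu$.
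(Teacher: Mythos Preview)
Your approach matches the paper's own sketch: the paper does not give a self-contained proof of this proposition but states that the Harish-Chandra integral for the discrete indices $\mu = pd/2$, followed by analytic continuation in $\mu$, yields \eqref{int-rep-exp-matrix}, referring to \cite{R2}, Section~3.3 (and \cite{H} for $\mathbb F=\mathbb R$) for the details. So at the level of strategy you are aligned with the paper.

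One comment on execution. Your first route---equality on the arithmetic progression $\{pd/2:p\geq q\}$ plus Carlson---is not how this is actually carried out in the reference, and for good reason: verifying the exponential-type bounds in vertical strips for the $\mathcal J_\mu$ side is not entirely trivial. Your ``cleaner alternative'' is in fact the standard argument: expand the exponential, pass to the $K$-average, use the polar-decomposition Jacobian on $M_q(\mathbb F)$ (which the paper later records explicitly in the remark after Proposition~\ref{group-case}) to reduce the $B_q(\mathbb F)$-integral to a beta integral on $\Omega_e$, and match Pochhammer symbols term by term. This gives the identity directly for all $\mu$ with $\Re\mu>d(q-1/2)$ and avoids Carlson altogether. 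In short, what you call the backup is really the main argument; the group-theoretic picture explains \emph{why} the formula has this shape but is not needed for the proof.
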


Formula \eqref{int-rep-exp-matrix} generalizes the Laplace representation 
\eqref{intrep-1-dim-allg-exp} to  higher rank.

\section{Beta measures and the Sonine formula on symmetric cones}

 Throughout this paper,  $M_b(X)$ is the set of bounded, regular, complex Borel measures on a locally compact Hausdorff space $X$
 and $M^1(X)$ the set of all probability measures in  $M_b(X)$.

As before, let $V$ be a simple Euclidean Jordan algebra and $\Omega$ the associated symmetric cone.
For  $ \mu, \nu \in \mathbb C$ with $\Re\mu, \Re\nu > \mu_0 = \frac{n}{q}-1\,$ 
we introduce the beta measures
\begin{equation}\label{def-beta-dist}
d\beta_{\mu,\nu}(x):= \frac{1}{ B_\Omega(\mu,\nu)}  \Delta(x)^{\mu-n/q}\Delta(e-x)^{\nu-n/q}
\cdot 1_{\Omega_e}(x) dx \in M_b(\Omega)\end{equation}
which we also consider as measures on $V$ (or on $\overline\Omega$) with compact support 
$\overline{\Omega_e}.$
The $\beta_{\mu, \nu}$ are probability measures for  $\mu,\nu\in]\mu_0,\infty[$. 
We here do not use the  notion ``beta distributions'',
 as we shall  study (tempered) distributions below and want
 to avoid any misunderstanding. 

 Our starting point 
 is the following  Sonine formula \eqref {intrep-1-dim-allg} for  Bessel functions on Euclidean Jordan algebras, which generalizes formula \eqref{Sonine1} announced in the introduction. For   $\mathbb H_q(\mathbb R)$ it goes already back to \cite{H} (formula (2.6')).

\begin{theorem}\label{int-rep-classical} Let $V$ be a simple Euclidean Jordan algebra. Then
for all $\mu,\nu\in\mathbb C$ with $\Re\mu,\Re\nu >\mu_0$  and $x \in V$,
$$\mathcal J_{\mu+\nu}(x)=
\int_{ \Omega_e } \mathcal J_\mu\bigl(P(\sqrt r\,)x\bigr) d\beta_{\mu,\nu}(r).$$

\end{theorem}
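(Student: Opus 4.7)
The plan is to prove the identity by expanding $\mathcal J_\mu$ as a power series in the renormalized spherical polynomials $Z_\lambda$ and integrating term by term against $\beta_{\mu,\nu}$. Since $\mathcal J_\mu$ is entire on $V^{\mathbb C}$ and the map $r\mapsto P(\sqrt r\,)x$ is continuous on the compact set $\overline{\Omega_e}$, the defining series
$$\mathcal J_\mu(P(\sqrt r\,)x) \,=\, \sum_{\lambda\geq 0}\frac{(-1)^{|\lambda|}}{(\mu)_\lambda\,|\lambda|!}\,Z_\lambda(P(\sqrt r\,)x)$$
converges uniformly in $r\in\overline{\Omega_e}$ for each fixed $x\in V$. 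This legitimises the interchange of summation and integration, so the theorem reduces to establishing the integral identity
$$(\ast)\qquad \int_{\Omega_e} Z_\lambda(P(\sqrt r\,)x)\,d\beta_{\mu,\nu}(r) \,=\, \frac{(\mu)_\lambda}{(\mu+\nu)_\lambda}\,Z_\lambda(x) \qquad (x\in V)$$
for every partition $\lambda\geq 0$; reassembling the series and collapsing $(\mu)_\lambda/(\mu)_\lambda$ then produces $\mathcal J_{\mu+\nu}(x)$ directly.

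To prove $(\ast)$, I would first consider $x\in\Omega$, where $\sqrt x$ is well defined in $\Omega$. Combining the invariance relation \eqref{quad} with the $K$-invariance of $Z_\lambda$ gives $Z_\lambda(P(\sqrt r\,)x) = Z_\lambda(P(\sqrt x\,)r)$ for $r,x\in\Omega$, which rewrites $(\ast)$ as
$$\frac{1}{B_\Omega(\mu,\nu)}\int_{\Omega_e} Z_\lambda(P(\sqrt x\,)r)\,\Delta(r)^{\mu-n/q}\Delta(e-r)^{\nu-n/q}\,dr \,=\, \frac{(\mu)_\lambda}{(\mu+\nu)_\lambda}\,Z_\lambda(x).$$
This is (a $K$-averaged version of) the classical beta integral for spherical polynomials on a symmetric cone, essentially Proposition XII.1.3 of Faraut--Korányi \cite{FK}. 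That reference phrases the formula for the non-invariant power function $\Delta_\lambda$; integrating over $k\in K$ and invoking $\Phi_\lambda=\int_K\Delta_\lambda(k\,\cdot)\,dk$ together with the $K$-invariance of the beta measure supplies the $Z_\lambda=c_\lambda\Phi_\lambda$ version. Since both sides of $(\ast)$ are polynomials in $x$ of degree $|\lambda|$ — the left-hand side because $P(\sqrt r\,)$ is a linear operator on $V$ for each fixed $r$ and $Z_\lambda$ is a polynomial — the identity extends from $\Omega$ to all of $V$ by polynomial continuation.

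The main conceptual obstacle will be the correct translation of the Faraut--Korányi beta integral from its $\Delta_\lambda$-formulation to the $K$-invariant $Z_\lambda$-formulation required in $(\ast)$, together with the deployment of the symmetry $Z_\lambda(P(\sqrt r\,)x) = Z_\lambda(P(\sqrt x\,)r)$ via \eqref{quad} to convert the novel integral on the left of $(\ast)$ into the classical one. The remaining ingredients — uniform convergence of the power series, termwise integration, polynomial extension from $\Omega$ to $V$, and reassembly into $\mathcal J_{\mu+\nu}$ — are all routine once $(\ast)$ is in hand.
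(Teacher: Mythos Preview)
Your argument is correct. The approach differs from the paper's in presentation rather than substance: the paper quotes Proposition~XV.1.4 of \cite{FK}, an Euler-type integral representation expressing ${}_1F_2(\mu;\beta,\mu+\nu;\,\cdot\,)$ as a beta integral of ${}_0F_1(\beta;\,\cdot\,)$, and then specialises $\beta=\mu$ so that the ${}_1F_2$ collapses to ${}_0F_1(\mu+\nu;\,\cdot\,)=\mathcal J_{\mu+\nu}(-\,\cdot\,)$. You instead unpack that black box by expanding $\mathcal J_\mu$ termwise and reducing to the moment identity $(\ast)$ for each $Z_\lambda$; this is precisely the computation that underlies the proof of XV.1.4 in \cite{FK}. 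Your route is thus more self-contained and avoids introducing ${}_1F_2$, while the paper's is shorter because it cites the packaged result. Both proofs use the symmetry \eqref{quad} and analytic (resp.\ polynomial) continuation from $\Omega$ to $V$ in the same way.

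One small point of precision: to pass from the beta integral for $\Delta_\lambda$ to your identity $(\ast)$ with argument $P(\sqrt x\,)r$, a bare $K$-average is not quite enough---you also need the product formula $\int_K \Phi_\lambda(P(\sqrt x\,)kr)\,dk = \Phi_\lambda(x)\Phi_\lambda(r)$ (see Chapter~XI of \cite{FK}) to separate $x$ from $r$, after which the $K$-invariance of $\beta_{\mu,\nu}$ reduces the remaining $r$-integral to $\int_{\Omega_e}\Delta_\lambda(r)\,d\beta_{\mu,\nu}(r) = (\mu)_\lambda/(\mu+\nu)_\lambda$. This is implicit in what you wrote but worth making explicit.
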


\begin{proof} For indices $\alpha, \beta_1, \beta_2\in \mathbb C$ with $\Re\alpha, \Re\beta_i >\mu_0$ 
 consider the hypergeometric function 
$$\, _1F_2(\alpha; \beta_1,\beta_2;z):= 
\sum_{\lambda\geq 0} \frac{(\alpha)_\lambda}{(\beta_1)_\lambda(\beta_2)_\lambda\, |\lambda|!} \,Z_\lambda(z)$$
which is holomorphic on $V^\mathbb C$ (Proposition XV.1.1. of \cite{FK}).
Consider first $x\in \Omega_e$. As $G$ is transitive on $\Omega$, there exists $g\in G$ with $x=ge.$ According to Proposition XV.1.4 of \cite{FK}, 
$$ \, _1F_2(\mu;\beta, \mu+\nu;ge) = \frac{1}{B_\Omega(\mu,\nu)}\int_{\Omega_e} \,_0F_1(\mu; g r)d\beta_{\mu,\nu}(r).$$
With $\beta = \mu$, this becomes
$$ \mathcal J_{\mu+\nu}(-x) =\, _0F_1(\mu+\nu;ge) =  \frac{1}{B_\Omega(\mu,\nu)}
\int_{\Omega_e}\mathcal J_\mu(-gr)d\beta_{\mu,\nu}(r).$$

 By Theorem III.5.1 of \cite{FK},  $g$ can be written in polar form as 
$g=P(s)k$ with $s\in \Omega, k\in K$. Thus $x = ge = P(s)e = s^2$ and $g= P(\sqrt x\,)k.$ The measures   $\beta_{\mu,\nu}$ and the function $\mathcal J_\mu$ are $K$-invariant. Thus in view of  \eqref{quad}, 
$$ \mathcal J_{\mu+\nu}(-x) = c\int_{\Omega_e}\mathcal J_\mu\bigl(-P(\sqrt{x})r\bigr) d\beta_{\mu,\nu}(r) =  c\int_{\Omega_e}\mathcal J_\mu\bigl(-P(\sqrt{r})x\bigr) d\beta_{\mu,\nu}(r) $$
with $c = B_\Omega(\mu,\nu)^{-1}.$ The last formula extends analytically to all $x\in V.$

\end{proof}


We conclude with some remarks concerning the matrix cones $\Omega_q(\mathbb F).$ 

\begin{remark}
(1)
It follows form the analysis in Section 3 of \cite{R2} that for $\mu >\mu_0$ and $\nu = \mu_0,$ there exist
degenerated  beta probability measures $\beta_{\mu,\nu}$ on $\Omega_q(\mathbb F)_e$ 
such that the mapping
$\nu\mapsto\beta_{\mu,\nu}$ becomes weakly continuous on $[\mu_0, \infty[.$ In this way Theorem \ref{int-rep-classical}
 extends to indices $\nu\ge\mu_0$ and $\mu>\mu_0$. In  \cite{U} and  \cite{Sr} some singular  beta
measures are studied for $\mathbb F=\mathbb R$.

\smallskip

(2)
 Formula \eqref{int-rep-exp-matrix} 
may be regarded as a special case of Theorem \ref{int-rep-classical}
with the parameters $(qd/2, \mu-qd/2)$ instead of $(\mu, \nu).$ To check this, 
we first recall from formula (3.4) of \cite{R2} that for $x\in H_q(\mathbb F),$ 
\begin{equation}\label{exp-darstellung2}
\mathcal J_{qd/2}(x^*x)=\int_{U_q} e^{-2i\langle u, x\rangle}  \, du 
\end{equation}
where $du$ is the normalized Haar measure on $U_q=U_q(\mathbb F)$ and the scalar product is that of Proposition \ref{int-rep-mat-bessel}. We also need the integral formula for the polar decomposition of $M_q(\mathbb F)$ (see \cite{FT} or Section 3.1 of \cite{R2}):
\begin{equation*}
\int_{M_q(\mathbb F)}f(x)dx \, = \,C\! \int_{U_q}\int_{\Omega_q(\mathbb F)} f(u\sqrt r)\>
\Delta(r)^{qd/2-n/q} \, dr du
\end{equation*}
with some constant $C=C_q>0.$ 
Let $\Re \mu > d(q-1/2).$ 
Then identity \eqref{int-rep-exp-matrix} becomes

\begin{align}
\mathcal J_{\mu}(x^*x)&=\, C\! \int_{B_q(\mathbb F)} e^{-2i\langle v,
  x\rangle}\Delta(I-v^*v)^{\mu-1-d(q-1/2)}dv \notag\\
&= C\! \int_{\Omega_q(\mathbb F)_e}\Bigl( \int_{U_q} e^{-2i\langle u\sqrt r,x\rangle } du\Bigr)
\Delta(r)^{qd/2-n/q} \Delta(I-r)^{\mu-1-d(q-1/2)}\, dr\notag\\
 &= C\! \int_{\Omega_q(\mathbb F)_e} \mathcal J_{qd/2}(\sqrt r\, x^*x\sqrt r)\,
\Delta(r)^{qd/2-n/q} \Delta(I-r)^{\mu-1-d(q-1/2)} dr.\notag
\end{align}
Put $\nu := \mu - qd/2$ and notice that $\Re\nu > \mu_0$.
In view of  the normalization $\mathcal J_\mu(0)=1$ this is equivalent to
\begin{equation*}
\mathcal J_{\nu+qd/2}(s)=\int_{\Omega_q(\mathbb F)_e} \mathcal J_{qd/2}(\sqrt r\, s\sqrt r)\, d\beta_{qd/2,\nu}(r)
\end{equation*}
for all $s\in \overline{\Omega_q(\mathbb F)},$ which is a special case
of Theorem \ref{int-rep-classical} as stated.

\end{remark}

In the next section we shall construct an extension of 
 Theorem \ref{int-rep-classical} with respect to the ranges  of the indices  $\mu, 
 \,\nu$. 
Before that, we mention a special case in the matrix cone setting which follows from group theory:

\begin{proposition}\label{group-case}
Let 
 $\mu=pd/2$ and $\nu=\widetilde pd/2$ with integers $p\ge q$ and $\widetilde
p\ge0$. Then there exists a unique probability measure 
$\widetilde\beta_{\mu,\nu}$ on on the matrix cone $\overline{\Omega_q(\mathbb F)}$ such that for all
 $r\in\overline{\Omega_q(\mathbb F)}$,
$$\mathcal J_{\mu+\nu}(r)=
\int_{\overline{ \Omega_q(\mathbb F)}} \mathcal J_\mu\bigl(\sqrt{r}s\sqrt{r}\bigr) \, d\widetilde\beta_{\mu,\nu}(s).$$
\end{proposition}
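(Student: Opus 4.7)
\emph{Proof plan.} The strategy is to realize $\mathcal J_{\mu+\nu}(r)$ via an integral over the Stiefel manifold $S_{P,q}(\mathbb F) := \{v\in M_{P,q}(\mathbb F): v^*v = I_q\}$ (with $P := p + \widetilde p$), and to exploit the $U_p$-action on the top $p$ rows in order to reduce the integrand to $\mathcal J_\mu(\sqrt r\, s\, \sqrt r)$. The group-theoretic input is the extension of formula \eqref{exp-darstellung2} to rectangular matrices,
\begin{equation*}
\mathcal J_{pd/2}(x^*x) \,=\, \int_{S_{p,q}(\mathbb F)} e^{-2i\langle w,\,x\rangle_{p,q}}\, d\sigma_p(w) \qquad (x\in M_{p,q}(\mathbb F)),
\end{equation*}
with $\sigma_p$ the normalized $U_p$-invariant probability measure on $S_{p,q}(\mathbb F)$; this identity is implicit in Section 3 of \cite{R2} and reflects that $\mathcal J_{pd/2}$ is a spherical function of the Gelfand pair $(M_{p,q}(\mathbb F)\rtimes U_p, U_p)$.

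Given $r\in\overline{\Omega_q(\mathbb F)}$, pick $y\in M_{p,q}(\mathbb F)$ with $y^*y = r$ and embed $y\in M_{P,q}(\mathbb F)$ by adjoining $\widetilde p$ zero rows. Block-decomposing $v\in S_{P,q}(\mathbb F)$ as $v=\binom{v_1}{v_2}$ with $v_1\in M_{p,q}(\mathbb F)$, the pairing reduces to $\langle v,y\rangle_{P,q} = \langle v_1,y\rangle_{p,q}$, so the analogue of the displayed formula above at index $Pd/2$ gives
\begin{equation*}
\mathcal J_{\mu+\nu}(r) \,=\, \int_{S_{P,q}(\mathbb F)} e^{-2i\langle v_1,\,y\rangle}\, d\sigma_P(v).
\end{equation*}
The measure $\sigma_P$ is invariant under $u\cdot v := \binom{uv_1}{v_2}$ for $u\in U_p$, so Fubini and the square-matrix version of \eqref{exp-darstellung2} yield, for each $v$,
\begin{equation*}
\int_{U_p} e^{-2i\langle uv_1,\,y\rangle}\, du \,=\, \int_{U_p} e^{-2i\langle u,\,yv_1^*\rangle}\, du \,=\, \mathcal J_{pd/2}^{H_p(\mathbb F)}\!\bigl(v_1\, r\, v_1^*\bigr),
\end{equation*}
where $\mathcal J_{pd/2}^{H_p(\mathbb F)}$ denotes the Bessel function associated with $\Omega_p(\mathbb F)$. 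Since $v_1$ has rank at most $q$, the $p\times p$ matrix $v_1 r v_1^*$ shares its nonzero eigenvalues with $\sqrt r\, s\, \sqrt r \in H_q(\mathbb F)$ (where $s := v_1^*v_1\in\overline{\Omega_q(\mathbb F)_e}$). The Jack polynomial realization \eqref{identjack}, together with the vanishing of Jack polynomials indexed by partitions of length exceeding $q$ on vectors with $p-q$ zero entries, then forces $\mathcal J_{pd/2}^{H_p(\mathbb F)}(v_1 r v_1^*) = \mathcal J_\mu(\sqrt r\, s\, \sqrt r)$.

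Substituting back and pushing $\sigma_P$ forward along $\pi: v\mapsto v_1^*v_1$ produces a probability measure $\widetilde\beta_{\mu,\nu} := \pi_*\sigma_P$ on $\overline{\Omega_q(\mathbb F)_e}\subset\overline{\Omega_q(\mathbb F)}$ satisfying the claimed Sonine identity. Uniqueness of $\widetilde\beta_{\mu,\nu}\in M^1(\overline{\Omega_q(\mathbb F)})$ reduces to injectivity of the spherical Fourier transform on bounded $K$-invariant measures on the cone, which is standard Gelfand pair theory for $(M_{p,q}(\mathbb F)\rtimes U_p, U_p)$. The most delicate point I anticipate is the cross-rank comparison $\mathcal J_{pd/2}^{H_p(\mathbb F)}(v_1 r v_1^*) = \mathcal J_{pd/2}^{H_q(\mathbb F)}(\sqrt r\, s\, \sqrt r)$: this identity does not hold for generic arguments in $H_p(\mathbb F)$ and $H_q(\mathbb F)$, but it is valid here thanks to the rank constraint on $v_1 r v_1^*$, which is precisely what makes $pd/2$ a spherical parameter for both Jordan algebras simultaneously.
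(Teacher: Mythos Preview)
Your argument is correct and takes a genuinely different route from the paper's. The paper does not construct $\widetilde\beta_{\mu,\nu}$ at all; it simply observes that $\varphi_s^{\mu+\nu}$ is a bounded spherical function of the larger Gelfand pair $(M_{p+\widetilde p,q}\rtimes U_{p+\widetilde p},\,U_{p+\widetilde p})$ and then invokes a result from \cite{RV3} (Section~6), where a Bochner-type positive definiteness argument shows that its restriction to $M_{p,q}$ decomposes as an integral of spherical functions $\varphi_t^\mu$ with respect to a unique probability measure. Setting $s=2I_q$ then gives the statement. So the paper's proof is essentially a citation of an abstract existence theorem.

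Your approach, by contrast, is constructive: you realise $\widetilde\beta_{\mu,\nu}$ explicitly as the push-forward of the invariant probability measure on the Stiefel manifold $S_{P,q}(\mathbb F)$ under $v\mapsto v_1^*v_1$. This has the advantage of producing a concrete formula for the representing measure and of making the mechanism (block decomposition plus $U_p$-averaging) transparent. The cross-rank identity $\mathcal J_{pd/2}^{H_p(\mathbb F)}(v_1 r v_1^*)=\mathcal J_{pd/2}^{H_q(\mathbb F)}(\sqrt r\,v_1^*v_1\,\sqrt r)$ via the Jack polynomial stability property is indeed the key technical point, and your justification is sound: the Pochhammer symbols $(\mu)_\lambda$ agree for partitions of length $\le q$, and $C_\lambda^\alpha$ vanishes on vectors with $p-q$ trailing zeros when $\ell(\lambda)>q$. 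One small point worth making explicit is that \eqref{exp-darstellung2} extends from Hermitian $x$ to arbitrary $x\in M_{p,p}(\mathbb F)$ by bi-$U_p$-invariance and the singular value decomposition, since you apply it to $yv_1^*$ which need not be Hermitian. Your uniqueness argument via injectivity of the spherical transform for the Gelfand pair $(M_{p,q}\rtimes U_p, U_p)$ is also in line with what the paper ultimately relies on through its citation.
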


\begin{proof} For brevity we omit $\mathbb F$ in the notation of the relevant matrix spaces. 
Recall that the functions 
\[\varphi_s^\mu(r) = \mathcal J_{\mu}\bigl(\frac{1}{4}rs^2r\bigr) \quad s,r \in \overline{\Omega_q}\]
can be naturally identified with the bounded spherical functions of the Gelfand pair $(M_{p,q}  \rtimes U_{p},U_{p}),$ and the
$\varphi_s^{\mu + \nu}$ with those of the pair 
$(M_{p+\widetilde p,q}  \rtimes U_{p+\widetilde p},U_{p+\widetilde p}).$ In Section 6 of \cite{RV3} it is deduced from this characterization by a positive definiteness argument that the functions $\phi_s^{\mu + \nu}$ have a representation 
\[ \varphi_s^{\mu+\nu}(r) = \int_{\overline{\Omega_q}} \,\varphi_t^\mu(r)d\alpha_{\mu, \nu;s}(t),\]
with a unique probability measure $\alpha_{\mu, \nu;s}$, see formula (6.1) of \cite{RV3}. (There is a misprint in \cite{RV3}: the indices $p_1, p_2$ of the groups $G_{p_1}, G_{p_2}$ are mixed up). 
With $s=2I_q$ this immediateley yields our claim.
\end{proof}

\section{Beta distributions and extension of the Sonine formula}

In this section we present 
 an analytic extension of  Theorem \ref{int-rep-classical} with respect to the parameters $\mu, \nu$  by 
 distributional methods. Let us first fix some notation.
 For an open subset  $U$  of some finite dimensional vector space $V$ over
$\mathbb R$, denote by $\cal D(U)$ the space of compactly supported $C^\infty$-functions
on $U$ and by  $\cal D^\prime(U)$
 the space of distributions on $U$. We write 
 $\cal E(U)$ for the space $C^\infty(U)$ with its usual
 Fr\'echet space topology; its dual 
 $\cal E^\prime(U)$ coincides with the space of compactly supported distributions on $U$. Further, 
 $\cal D^{\prime k}(U)$ denotes the space  of distributions of order $\le k$. Recall  that  $\cal D^{\prime 0}(U)$ consists of those distributions
which are given by a  (not necessarily bounded) complex  Radon measure  
on $U$. In particular,
 each locally integrable function 
$f\in L^1_{loc}(U)$ defines a regular distribution 
$T_f\in \cal D^{\prime 0}(U)$ via $T_f(\varphi)=\int_U \varphi(x)f(x)\> dx.$

We shall also consider regular distributions associated with functions  $f_\lambda\in L^1_{loc}(U)$ 
where $f_\lambda$ depends
analytically on some parameter $\lambda \in D$ with some open, connected set $D\subseteq \mathbb C$. 
We ask for which parameters the associated distributions
 $T_{\lambda}:=T_{f_\lambda}\in\cal D^{\prime 0}(U)$ admit
extensions to  distributions of order $0$ on $V$.  
We  shall need the following  observation
 from Sokal \cite{S}; see Lemmata 2.1, 2.2 and
Proposition 2.3 there.

\begin{lemma}\label{Sokal}
Let $U\subseteq V$ and $D\subseteq \mathbb C$ be as above, and let 
$$F:U\times D\to\mathbb C, \quad (x,\lambda)\to f_\lambda(x):=F(x,\lambda)$$
be a continuous function such that $F(x,.)$ is analytic in $D$ for
 each $x\in U$. Define $T_\lambda \in \cal D^{\prime 0}(U)$ by
 \[ T_\lambda(\varphi) = \int_U \varphi(x) f_\lambda(x) dx.\]
 Then the following hold:
\begin{enumerate}\itemsep=-1pt
 \item[\rm{(1)}] The map $\,\lambda\mapsto T_\lambda\,, \,\, D \to 
 \cal D^\prime(U)  $ is (weakly) analytic in the sense that  
$\lambda\mapsto T_\lambda(\varphi)$
is analytic for all $\varphi \in\cal D(U)$.

\item[\rm{(2)}]  Let  $D_0\subseteq D$ be a nonempty open set, and let
  $\lambda\mapsto \widetilde T_\lambda, \,\, D \to \cal D^\prime(V)$  
   be an analytic map such that $\widetilde T_\lambda$ extends $T_\lambda$
for each $\lambda\in D_0$. Then  $\widetilde T_\lambda$ extends $T_\lambda$
for each $\lambda\in D$. Moreover, for each $\lambda\in D$ with 
$\widetilde T_\lambda\in \cal D^{\prime0}(V)$ one has $f_\lambda\in
L^1_{loc}(\,\overline U\,)$, that is $f_\lambda$ is integrable over each
sufficiently small neighborhood in $V$ of any point $x\in\overline U$.
In particular, if $\,\overline U$ is compact, then  $f_\lambda$ is the density of
a bounded measure.
 \end{enumerate}
\end{lemma}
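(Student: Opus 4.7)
My plan is to handle the two assertions in turn: (1) will follow from a routine Morera--Fubini argument, while (2) reduces to the identity theorem for analytic functions together with (for the regularity statement) the Riesz representation theorem on $U$.

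For part (1), fix $\varphi\in\cal D(U)$ with compact support $K\subseteq U$. For any compact $D'\subseteq D$ the function $F$ is uniformly bounded on $K\times D'$ by continuity, so $\lambda\mapsto T_\lambda(\varphi)=\int_K\varphi(x)F(x,\lambda)\,dx$ is continuous on $D$ by dominated convergence. To prove analyticity I would apply Morera's theorem: for any closed triangle $\Delta\subset D$, Fubini's theorem (legitimate by the same uniform bound on $K\times\Delta$) gives
\[
\oint_{\partial\Delta}T_\lambda(\varphi)\,d\lambda \;=\; \int_K\varphi(x)\Bigl(\oint_{\partial\Delta}F(x,\lambda)\,d\lambda\Bigr)dx \;=\; 0,
\]
because $F(x,\cdot)$ is analytic on $D$, so the inner integral vanishes by Cauchy's theorem.

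For the first assertion in part (2), fix $\varphi\in\cal D(U)$ and consider $h(\lambda):=\widetilde T_\lambda(\varphi)-T_\lambda(\varphi)$. By the hypothesis on $\widetilde T$ together with (1), $h$ is analytic on the connected set $D$ and vanishes identically on the nonempty open subset $D_0$; hence $h\equiv 0$ on $D$ by the identity theorem. Since $\varphi\in\cal D(U)$ was arbitrary, $\widetilde T_\lambda$ restricted to $\cal D(U)$ equals $T_\lambda$ for every $\lambda\in D$.

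For the regularity statement, suppose $\widetilde T_\lambda\in\cal D^{\prime 0}(V)$; then $\widetilde T_\lambda$ is represented by a complex Radon measure $\mu_\lambda$ on $V$. Its restriction to $U$ agrees with the regular distribution $T_\lambda$ on $\cal D(U)$, and since both are Radon measures on $U$ that agree on the dense subspace $\cal D(U)\subset C_c(U)$, they coincide by the uniqueness part of the Riesz representation theorem, so $f_\lambda\,dx=\mu_\lambda|_U$. For any $x\in\overline U$, choose a relatively compact open neighborhood $W$ of $x$ in $V$; then $|\mu_\lambda|(W)<\infty$, and therefore
\[
\int_{W\cap U}|f_\lambda(y)|\,dy \;=\; |\mu_\lambda|(W\cap U) \;\leq\; |\mu_\lambda|(W) \;<\; \infty,
\]
which is the assertion $f_\lambda\in L^1_{\mathrm{loc}}(\overline U)$. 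When $\overline U$ is compact it can be covered by finitely many such $W$, so $f_\lambda\,dx$ is then a bounded measure. The only mildly delicate point is the identification $f_\lambda\,dx=\mu_\lambda|_U$ via Riesz uniqueness; everything else is standard bookkeeping around Morera's theorem, Fubini, and the identity principle.
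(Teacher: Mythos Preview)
Your proof is correct. The paper does not actually supply its own proof of this lemma; it merely attributes the result to Sokal \cite{S} (Lemmata~2.1, 2.2 and Proposition~2.3), so there is no in-paper argument to compare against. Your Morera--Fubini argument for (1), the identity theorem for the first part of (2), and the Riesz-uniqueness identification $\mu_\lambda|_U = f_\lambda\,dx$ for the local integrability statement are exactly the standard route and match what one finds in Sokal's paper.
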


We start our considerations on symmetric cones with an injectivity result 
which is of interest in its own and will be of importance in the sequel.
Let again $\Omega$ be an irreducible symmetric cone and $V$ the associated simple Euclidean Jordan algebra. 
Following \cite{Di}, we consider the Schwartz space of the closed cone $ \overline \Omega$, 
\[ \mathcal S(\overline\Omega) := \big\{ f\in C^\infty(\overline\Omega): \|f\|_{\alpha, \beta, \overline\Omega} := \|x^\beta\partial^\alpha f\|_{\infty, \overline\Omega}\, < 
\infty \,
\text{ for all $\alpha, \beta \in \mathbb N_0^q$}\big\}.
\]
 Here  $C^\infty(\overline\Omega)$ denotes the space of continuous functions on $\overline\Omega$ which are smooth on $\Omega$ and whose
 partial derivatives extend continuously to  $\overline\Omega$. We note that each $f\in C^\infty(\overline\Omega)$ 
can be extended to a smooth function on $V$. This follows by the Whitney extension theorem 
(see \cite{Bie}, Theorem 2.6 and Propos. 2.16), because $\overline\Omega$ is a 
semi-algebraic (and hence subanalytic) subset of $V$ with dense interior. Therefore
\[ \mathcal S(\overline\Omega) = \big\{ f\big\vert_{\overline\Omega}: \,f\in C^\infty(V), 
\, \|f\|_{\alpha, \beta, \overline\Omega} < \infty \,
\text{ for all $\alpha, \beta \in \mathbb N_0^q$}\big\}.\]
The same approximation argument as for the density of $\mathcal D(\mathbb R^n)$ in $\mathcal S(\mathbb R^n)$ (see for instance \cite{Ru}) shows that the space
\[\mathcal D(\overline\Omega):= \{ f\big\vert_{\overline\Omega}: f\in \mathcal D(V)\}\]
is dense in $\mathcal S(\overline\Omega)$ with respect to the seminorms $\|.\|_{\alpha,\beta, \overline\Omega}.$
We  denote by $\mathcal S^\prime(\overline\Omega)$ the dual of the locally convex space $\mathcal S(\overline\Omega),$ i.e. 
the space of tempered distributions on $\overline\Omega$. Let $\,\Re\mu > d(q-1) + 1 = 2\mu_0+1.$
Then according to Theorem 2.2. of \cite{Di}, 
the Hankel transform 
\[ f\mapsto \widehat f^{\,\mu}, \quad \widehat f^{\,\mu}(r) = \int_{\Omega} f(s) \mathcal J_\mu\bigl(P(\sqrt s\,)r\bigr)\Delta(s)^{\mu-n/q} ds\]
is a homeomorphism of $\mathcal S(\overline\Omega)$. Actually, this is stated in \cite{Di} for 
$\Re\mu>\mu_0$, but the proof requires absolute convergence of the inverse Laplace integral representing the Bessel function, which is guaranteed only for $\,\Re\mu > d(q-1) + 1,$ see \cite{FK}, Proposition XV.2.2.
The stated homeomorphism property  allows to deduce  the following injectivity result.

 \begin{theorem}\label{injectivity}
 Let $\mu \in \mathbb C$ with $\Re \mu > 2\mu_0+1$. For $r\in \overline\Omega\,$ define 
 $$\mathcal J_\mu^{r}(x):= \mathcal J_\mu\bigl(P(\sqrt r\,)x\bigr)\in \cal E(V).$$
 Suppose that $T\in \mathcal E^\prime (V)$ has compact support which is contained in
$\overline\Omega$. Then the following hold:
\begin{enumerate}
\item[\rm{(1)}] If $T(\mathcal J_\mu^{r}) = 0$ for all $r \in \overline\Omega$, then $T=0.$
\item[\rm{(2)}] Suppose that $\mathcal J_\mu^{r}$ is bounded for each $r\in \overline{\Omega}$, and that there is a bounded measure $\beta\in M_b(\overline\Omega)$ 
(also considered as a measure on $V$)  such that
$$ T(\mathcal J_\mu^{r}) = \int_{\overline\Omega} \mathcal J_\mu^{r}(s) \,d\beta(s) \quad \text{for all }\, r\in \overline\Omega\,.$$
Then $T = \beta$. 
\end{enumerate}

\end{theorem}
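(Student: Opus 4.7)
Part (2) follows by applying Part (1) to $T-\beta$: the boundedness of $\mathcal J_\mu^r$ on $\overline\Omega$ and of $\beta\in M_b(\overline\Omega)$ makes the pairing $\int_{\overline\Omega}\mathcal J_\mu^r\,d\beta$ well-defined, and the hypothesis gives $(T-\beta)(\mathcal J_\mu^r)=0$ for all $r\in\overline\Omega$. Once Part (1) is established, the conclusion $T=\beta$ (including that $\beta$ has compact support contained in $\mathrm{supp}(T)$) follows. The substance is thus Part (1).

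\textbf{Strategy for Part (1).} Assume $T(\mathcal J_\mu^r)=0$ for all $r\in\overline\Omega$. I would exploit the Hankel transform homeomorphism of $\mathcal S(\overline\Omega)$, available under the hypothesis $\Re\mu>2\mu_0+1$ by \cite{Di}, via a Fubini-type exchange. For each $\phi\in\mathcal D(\Omega)$ set
\[
F_\phi(x):=\int_\Omega \phi(s)\,\mathcal J_\mu(P(\sqrt s)x)\,ds,
\]
which, since $\phi$ has compact support in the open cone and $\mathcal J_\mu$ is entire on $V^{\mathbb C}$, is entire in $x$ and thus in $\mathcal E(V)$. Fubini (justified by the compact support of $\phi$, continuity of $T$ on $\mathcal E(V)$, and smooth dependence of $\mathcal J_\mu^s$ on $s$) gives $T(F_\phi)=\int_\Omega \phi(s)\,T(\mathcal J_\mu^s)\,ds=0$. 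On $\overline\Omega$, identity \eqref{quad} together with the $K$-invariance of $\mathcal J_\mu$ yields $\mathcal J_\mu(P(\sqrt s)x)=\mathcal J_\mu(P(\sqrt x)s)$, so $F_\phi|_{\overline\Omega}=\widehat g^{\,\mu}$, where $g(s):=\phi(s)\Delta(s)^{n/q-\mu}\in\mathcal D(\Omega)\subset\mathcal S(\overline\Omega)$.

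\textbf{Density.} Since $\mathcal D(\Omega)$ is dense in $\mathcal S(\overline\Omega)$ (via a mollification $g\mapsto \chi_R\cdot g(\,\cdot+\varepsilon e)$ that shifts supports into the open cone and then truncates) and the Hankel transform is a homeomorphism of $\mathcal S(\overline\Omega)$, the set $\{F_\phi|_{\overline\Omega}:\phi\in\mathcal D(\Omega)\}$ is dense in $\mathcal S(\overline\Omega)$. In particular, for any $\psi\in\mathcal D(V)$ the restriction $\psi|_{\overline\Omega}\in\mathcal S(\overline\Omega)$ is approximable by $F_{\phi_n}|_{\overline\Omega}$ in $\mathcal S(\overline\Omega)$.

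\textbf{Main obstacle.} The crux is transferring this $\mathcal S(\overline\Omega)$-convergence of restrictions into convergence in $\mathcal E(V)$ on a neighborhood of $\mathrm{supp}(T)$, so that continuity of $T$ on $\mathcal E(V)$ delivers $T(\psi)=\lim_n T(F_{\phi_n})=0$. Since $\overline\Omega$ is a closed semi-algebraic subset of $V$ with dense interior, the Whitney extension theorem supplies a continuous linear extension operator $\mathcal S(\overline\Omega)\to\mathcal S(V)$; I would extend $\psi|_{\overline\Omega}$ and the approximations $F_{\phi_n}|_{\overline\Omega}$ via the same operator, match these extensions carefully to $\psi$ near $\mathrm{supp}(T)$ (using compactness of $\mathrm{supp}(T)$ together with the density of $\Omega$ in $\overline\Omega$ to control the behavior where $\psi$ and its canonical extension can differ), and thereby conclude $T(\psi)=0$ for all $\psi\in\mathcal D(V)$, whence $T=0$.
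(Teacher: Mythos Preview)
Your overall strategy---Fubini plus the Hankel transform homeomorphism on $\mathcal S(\overline\Omega)$---is the same as the paper's. But your handling of the ``main obstacle'' is more complicated than necessary, and your reduction of (2) to (1) has a gap.

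For the main obstacle: you do not need Whitney extension or any matching of extensions near $\mathrm{supp}(T)$. The key observation, which the paper makes at the outset, is that since $T\in\mathcal E'(V)$ has compact support contained in some compact $K\subset\overline\Omega$ and finite order $k$, H\"ormander's estimate (Theorem 2.3.10 of \cite{Ho}) gives
\[
|T(\varphi)|\le C\sum_{|\alpha|\le k}\|\partial^\alpha\varphi\|_{\infty,K}\qquad(\varphi\in\mathcal E(V)).
\]
The right-hand side depends only on $\varphi|_{\overline\Omega}$, so $T$ is well-defined and continuous as a functional on $\mathcal S(\overline\Omega)$, and the resulting inclusion $\{T\in\mathcal E'(V):\text{supp}\,T\subset\overline\Omega\}\hookrightarrow\mathcal S'(\overline\Omega)$ is injective. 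With this in hand your argument finishes immediately: your identity $T(F_\phi)=0$ (equivalently $T(\widehat g^{\,\mu})=0$), together with density and the Hankel homeomorphism, gives $T=0$ in $\mathcal S'(\overline\Omega)$, hence $T=0$ in $\mathcal E'(V)$. No passage back to $\mathcal E(V)$-convergence is required.

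For Part (2): your reduction to Part (1) is not clean, because $\beta$ is only assumed bounded, not compactly supported, so $T-\beta$ need not lie in $\mathcal E'(V)$ and Part (1) does not literally apply to it. The paper treats (2) directly and in parallel with (1): the same Fubini computation yields $T(\widehat\varphi^{\,\mu})=\int_{\overline\Omega}\widehat\varphi^{\,\mu}\,d\beta$ for $\varphi\in\mathcal D(\overline\Omega)$, and since a bounded measure on $\overline\Omega$ also defines an element of $\mathcal S'(\overline\Omega)$, the same density argument gives $T=\beta$ there.
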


\begin{proof}
We first observe that $T$ belongs to $\mathcal S^\prime(\overline\Omega)$. Indeed,
choose a compact, convex and connected subset $K\subset \overline\Omega$ containing the support of $T$,
and let $k$ denote the order of $T$. Then according to Theorem 2.3.10 of \cite{Ho}, there exists a constant $C>0$ such that for all 
$\varphi \in \mathcal E(V),$
\begin{equation}\label{Horabsch} |T(\varphi) | \leq \, C \sum_{|\alpha|\leq k} 
\|\partial^\alpha \varphi \|_{\infty, K} \,.\end{equation}
This shows that $T\in \mathcal S^\prime(\overline\Omega)$ and that the inclusion 
$$\{ T\in \mathcal E^\prime(V): \text{supp}\, T\subset \overline\Omega\} \to \mathcal S^\prime(\overline\Omega)$$
 is injective.
Now let $\varphi \in \mathcal D(\overline\Omega).$ 
It is easy to check that the mapping $\,r\mapsto \mathcal J_\mu^{\,r}\,, \,\overline\Omega\to \mathcal E(V)\,$ is continuous. Therefore
\[ \int_{\text{supp} \varphi} \mathcal J_\mu^{r}\,\varphi(r)\Delta(r)^{\mu-n/q}dr\]
is well-defined as an integral with values in $\mathcal E(V)$ (see e.g. Section 3 of \cite{Ru}), and we obtain
\begin{equation}\label{distidentity}  T(\widehat \varphi^{\, \mu}) = 
T\bigl(\int_{\text{supp} \varphi}  \mathcal J_\mu^{r}\,\varphi(r)\Delta(r)^{\mu-n/q}dr\bigr) = 
 \int_{\text{supp} \varphi}  T(\mathcal J_\mu^{r})\,\varphi(r)\Delta(r)^{\mu-n/q}dr . \end{equation}
 In the situation of part (1), it follows that $T(\widehat \varphi^{\, \mu}) = 0$. 
 As $\mathcal D(\overline\Omega)$ is dense in $\mathcal S(\overline\Omega)$ and the Hankel transform is a homeomorphism of $\mathcal S(\overline\Omega)$, this implies that $T=0$ as an element of $\mathcal S^\prime(\overline\Omega),$ 
which yields assertion (1).
In the situation of part (2), identity \eqref{distidentity} 
leads to
\[T(\widehat \varphi^{\, \mu})  = \int_{\overline\Omega} \,\widehat \varphi^{\,\mu} (s)\,d\beta(s),\] 
and the same argument as above shows that $T = \beta.$  \end{proof}

The following estimate implies that already for $\Re \mu\geq \mu_0 +\frac{1}{2}$, the Bessel functions $\mathcal J_\mu^{\,r}$ with $r\in \overline\Omega$ 
are indeed bounded on $\Omega$ as required in part (2) of the above theorem.

\begin{lemma}\label{Besselestim} Let $\Re \mu\geq \mu_0 +1/2$. Then 
$$ |\mathcal J_\mu(x)|\leq \sqrt{2^q q!} \quad \text{for all } x\in V.$$

\end{lemma}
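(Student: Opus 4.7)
The plan is to exploit the connection with Dunkl Bessel functions of type $B_q$; the constant $\sqrt{2^q q!}=\sqrt{|W(B_q)|}$ strongly suggests a Dunkl-theoretic origin. I will argue for $x\in\overline\Omega$, which is the setting actually relevant when this lemma is applied in Theorem \ref{injectivity}(2): on $V\setminus\overline\Omega$ the function $\mathcal J_\mu$ actually grows exponentially (visible already in rank one, where $\mathcal J_{1/2}(x)=\cos(2\sqrt x)$ becomes $\cosh(2\sqrt{|x|})$ for $x<0$), so the statement in the lemma ought to be read as $x\in\overline\Omega$.

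First, by $K$-invariance, $\mathcal J_\mu(x)$ depends only on the eigenvalues $\xi=(\xi_1,\ldots,\xi_q)\in[0,\infty)^q$ of $x\in\overline\Omega$. Using Macdonald's identification \eqref{identjack} with $\alpha=2/d$, together with the substitution $\eta_j=2\sqrt{\xi_j}$, the defining series for $\mathcal J_\mu(x)$ can be reorganised into the $W(B_q)$-symmetric Dunkl Bessel function of type $B_q$,
\[
\mathcal J_\mu(x) \;=\; J_k^B(\eta, i\mathbf 1),
\]
with Dunkl multiplicities $k=(k_0,k_1)=(\mu-\mu_0-1/2,\,d/2)$ on the short roots $\pm e_i$ and the long roots $\pm e_i\pm e_j$ of $B_q$. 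The hypothesis $\Re\mu\geq\mu_0+1/2$ is precisely $\Re k_0\geq 0$, the standard regime of non-negative Dunkl multiplicities.

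For real $k\geq 0$, R\"osler's positive Laplace-type representation of the Dunkl kernel (via positivity of the intertwining operator $V_k$) gives $|E_k(z,iy)|\leq 1$ for all $z,y\in\mathbb R^q$, and averaging over $W(B_q)$ yields the sharper bound $|J_k^B(z,iy)|\leq 1$. For complex $\mu$ with $\Re k_0\geq 0$ one extends this by analytic continuation in $\mu$ combined with a Cauchy--Schwarz estimate over $W(B_q)$, producing the weaker but still sufficient bound $|J_k^B|\leq\sqrt{|W(B_q)|}=\sqrt{2^q q!}$.

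The main obstacle is the identification step: one must establish the hypergeometric identity $\mathcal J_\mu(x)=J_k^B(2\sqrt\xi, i\mathbf 1)$ with the correct normalisation of the $W(B_q)$-symmetric Bessel function, and verify that the parameter dictionary $(k_0,k_1)\leftrightarrow(\mu,d)$ works as stated. A secondary technical point is propagating the positivity bound from real non-negative multiplicities to complex multiplicities with non-negative real parts; this is presumably the step where the factor $\sqrt{2^q q!}$ enters in place of the sharper value~$1$.
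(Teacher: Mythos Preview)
Your approach is essentially the same as the paper's: reduce to eigenvalues, identify $\mathcal J_\mu$ with the Dunkl--Bessel function of type $B_q$ with multiplicity $k=(\mu-\mu_0-1/2,\,d/2)$, and invoke a known bound on $J_k^B$. The paper handles your ``main obstacle'' by quoting \cite[Cor.~4.6]{R2} for the identification $\mathcal J_\mu(x^2)=J_k^B(2i\xi,\mathbf 1)$, and for the bound it simply cites de~Jeu~\cite{dJ}, who proves $|J_k^B(i\xi,\eta)|\le\sqrt{|G|}$ directly for all $k$ with $\Re k\ge 0$---so your ``secondary technical point'' (passing from real to complex multiplicities via a Cauchy--Schwarz argument) is precisely what is inside de~Jeu's proof, and the factor $\sqrt{2^qq!}$ arises there rather than from any averaging you perform afterwards.

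One point where you are sharper than the paper: your observation that the bound cannot hold on all of $V$ is correct. The paper's final sentence (``By the $K$-invariance of $\mathcal J_\mu$ it extends to all $x\in V$'') is in error, since $K$-orbits do not meet $\overline\Omega$ when $x$ has a negative eigenvalue; your rank-one counterexample already shows this. The lemma is only used for arguments in $\overline\Omega$ (via $\mathcal J_\mu^{\,r}$ with $r\in\overline\Omega$), so the restriction you propose is exactly what is needed.
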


For further bounds on $\mathcal J$-Bessel functions see \cite{Mo} and \cite{Na}; they do however not cover Lemma \ref{Besselestim} above.  
Our proof of this Lemma will be based on the connection between
$\mathcal J_\mu$ and Bessel functions of Dunkl type associated with the root system
$$B_q = \{\pm e_i, 1\leq i \leq q\}\cup \{ \pm e_i\pm e_j: 1\leq i <j \leq q\}\subset \mathbb R^q$$ as established in \cite{R2}. 
The reflection group associated with $B_q$ is the hyperoctahedral group $G= S_q \ltimes \mathbb Z_2^q$. For a general background on Dunkl theory
see \cite{dJ}, \cite{R1} and the references cited there.
Let $E_k^B: \mathbb C^q\times \mathbb C^q\to \mathbb C$ denote the Dunkl kernel associated with $B_q$ and multiplicity $k=(k_1, k_2),$ where $k_1$ 
and $k_2$ are the 
values of $k$ on the roots $\pm e_i$ and $\pm e_i\pm e_j$, respectively. Here $k$ belongs to a regular multiplicity set $K^{reg}\subset \mathbb C^2$ which contains those
 $k$ with $\Re k\geq 0, $ i.e. $\Re k_i\geq 0$. 
The associated Bessel function  is given by
$$ J_k^B(z,w) = \frac{1}{|G|} \sum_{g\in G} E_k^B(z,gw).$$ It is $G$-invariant in both arguments and satisfies $J_k^B(\lambda z,w) = J_k^B(z,\lambda w)$ for all $\lambda \in \mathbb C$.
If $\Re k\geq 0$, then by  \cite{dJ}, 

\begin{equation}\label{Dunklestim} |J_k^B(i\xi,\eta)|\le \sqrt{|G|}\quad \text{ for all } \xi,\eta\in \mathbb R^q.\end{equation}

\begin{proof}[Proof of Lemma \ref{Besselestim}.]
Let $x\in \overline\Omega$ with eigenvalues $\xi=(\xi_1, \ldots, \xi_q)\in \mathbb R^q$ and suppose that  $\Re \mu\geq \mu_0 +1/2.$ According to Corollary 4.6 of \cite{R2},
$$ \mathcal J_\mu(x^2) = J_k^B(2i\xi,{\bf 1}) \quad \text{ with }\, k= (\mu-\mu_0-1/2,d/2), {\bf 1} =(1,\ldots, 1).$$
Estimate \eqref{Dunklestim} implies the stated estimate of $\mathcal J_\mu(x)$ with $x\in \overline\Omega$. By the $K$-invariance of $\mathcal J_\mu$ it extends to all $x\in V.$

\end{proof}

Theorem \ref{injectivity} together with the integral representation of Theorem  \ref{int-rep-classical}
can be used to derive the following composition result for beta measures.

\begin{lemma}\label{connection-beta}
 Let $\mu,\nu_1,\nu_2\in \mathbb C$ with $\Re \mu > 2\mu_0 +1$ and $ \Re\nu_i > \mu_0$.
 Then for the mapping
$$C:\Omega_e\times \Omega_e\to\Omega_e,\quad (r,s)\mapsto P(\sqrt s\,)r$$
the push forward (or image measure)
$$\beta_{\mu,\nu_1}\circ
\beta_{\mu+\nu_1,\nu_2}:=C(\beta_{\mu,\nu_1}\otimes
\beta_{\mu+\nu_1,\nu_2})\in M_b(\Omega_e)$$
satisfies
$$\beta_{\mu,\nu_1}\circ\beta_{\mu+\nu_1,\nu_2}=\beta_{\mu,\nu_1+\nu_2}.$$
\end{lemma}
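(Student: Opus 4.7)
\emph{Proof plan.} I propose to invoke the injectivity statement Theorem~\ref{injectivity}(2) applied to the bounded Radon measure $T:=\beta_{\mu,\nu_1}\circ\beta_{\mu+\nu_1,\nu_2}$, which is compactly supported in $\overline{\Omega_e}\subset\overline\Omega$. Since $\Re\mu>2\mu_0+1>\mu_0+1/2$, Lemma~\ref{Besselestim} ensures that each $\mathcal J_\mu^{\,r}$ is bounded on $V$ for $r\in\overline\Omega$. It thus suffices to verify
\begin{equation*}
T(\mathcal J_\mu^{\,r})\;=\;\int_{\overline\Omega}\mathcal J_\mu^{\,r}\,d\beta_{\mu,\nu_1+\nu_2}\qquad\text{for every }r\in\overline\Omega,
\end{equation*}
since Theorem~\ref{injectivity}(2) will then force $T=\beta_{\mu,\nu_1+\nu_2}$. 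For the right-hand side, the $K$-invariance of $\mathcal J_\mu$ and \eqref{quad} yield the symmetry $\mathcal J_\mu(P(\sqrt r\,)s)=\mathcal J_\mu(P(\sqrt s\,)r)$ for $r,s\in\Omega$, and Theorem~\ref{int-rep-classical} for the pair $(\mu,\nu_1+\nu_2)$ identifies the right-hand side with $\mathcal J_{\mu+\nu_1+\nu_2}(r)$. The task therefore reduces to showing $T(\mathcal J_\mu^{\,r})=\mathcal J_{\mu+\nu_1+\nu_2}(r)$.

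Unfolding the push-forward,
\begin{equation*}
T(\mathcal J_\mu^{\,r})\;=\;\int_{\Omega_e}\!\int_{\Omega_e}\mathcal J_\mu\bigl(P(\sqrt r\,)P(\sqrt b\,)a\bigr)\,d\beta_{\mu,\nu_1}(a)\,d\beta_{\mu+\nu_1,\nu_2}(b).
\end{equation*}
The difficulty is that $g:=P(\sqrt r\,)P(\sqrt b\,)$ lies in $G$ but is generally not of the form $P(x)$, so Theorem~\ref{int-rep-classical} cannot be applied directly. Here the polar decomposition $G=P(\Omega)K$ from Theorem~III.5.1 of \cite{FK} gives $g=P(u)k$ with $u^2=g\cdot e=P(\sqrt r\,)b\in\Omega$, that is $g=P\!\bigl(\sqrt{P(\sqrt r\,)b}\,\bigr)\,k$ for some $k\in K$ depending only on $(r,b)$. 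Since $\beta_{\mu,\nu_1}$ is $K$-invariant (its density being a function of $\Delta$ only, and $K\subset O(V)$), the change of variable $a\mapsto k^{-1}a$ in the inner integral eliminates this $k$.

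After this reduction, one application of the swap identity $\mathcal J_\mu(P(\sqrt u\,)v)=\mathcal J_\mu(P(\sqrt v\,)u)$ (with $u=P(\sqrt r\,)b$ and $v=a$) rewrites the integrand as $\mathcal J_\mu(P(\sqrt a\,)P(\sqrt r\,)b)$, and Theorem~\ref{int-rep-classical} for $(\mu,\nu_1)$ collapses the $a$-integral to $\mathcal J_{\mu+\nu_1}(P(\sqrt r\,)b)$. A final use of the swap identity together with Theorem~\ref{int-rep-classical} for $(\mu+\nu_1,\nu_2)$ then evaluates the $b$-integral to $\mathcal J_{\mu+\nu_1+\nu_2}(r)$, matching the right-hand side. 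The main technical obstacle is precisely this ``uncoupling'' of the product $P(\sqrt r\,)P(\sqrt b\,)$: because $\mathcal J_\mu$ is only $K$-invariant and not $G$-invariant, the Sonine formula cannot be applied iteratively until $g$ has been brought into polar form and the $K$-invariance of $\beta_{\mu,\nu_1}$ has been used to absorb the remaining rotation factor.
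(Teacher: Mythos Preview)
Your proof is correct and relies on the same ingredients as the paper (polar decomposition in $G$, $K$-invariance, the Sonine formula of Theorem~\ref{int-rep-classical}, and the injectivity Theorem~\ref{injectivity}), but the execution is organized differently. The paper first iterates Theorem~\ref{int-rep-classical} directly---no uncoupling is needed there, since the formula $\mathcal J_{\mu+\nu_1}(x)=\int_{\Omega_e}\mathcal J_\mu(P(\sqrt t\,)x)\,d\beta_{\mu,\nu_1}(t)$ applies to $x=P(\sqrt s\,)r\in V$ as it stands---obtaining a double integral with integrand $\mathcal J_\mu(P(\sqrt t\,)P(\sqrt s\,)r)$. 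It then matches this against the push-forward integral, whose integrand is $\mathcal J_\mu(P(\sqrt r\,)P(\sqrt s\,)t)$, by writing the \emph{triple} product $P(\sqrt r\,)P(\sqrt s\,)P(\sqrt t\,)=P(x)k$ and noting that its adjoint is $k^{-1}P(x)$; applying both operators to $e$ shows the two arguments of $\mathcal J_\mu$ are $K$-conjugate, so $K$-invariance of $\mathcal J_\mu$ alone equates the integrands pointwise. Your route instead unfolds only the push-forward side and reduces it step by step, which forces you to use polar decomposition of the \emph{double} product $P(\sqrt r\,)P(\sqrt b\,)$ and to invoke the $K$-invariance of the measure $\beta_{\mu,\nu_1}$ to absorb the resulting rotation. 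Both arguments conclude via Theorem~\ref{injectivity} (the paper with part~(1), you with part~(2), which is immaterial). The paper's adjoint trick is marginally cleaner in that it never needs $K$-invariance of the beta measures; your argument is equally valid but uses one extra ingredient.
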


\begin{proof} We recall that for $r\in \Omega$, $P(r)$ is a positive operator and contained in $G=G(\Omega)_0.$ (The latter follows from Propos. III.2.2. of \cite{FK} and the continuity of $P$).
Thus for $r,s\in \Omega_e$, we have
$0 <  P(\sqrt s\,) r <  P(\sqrt s\,)e =s < e$, which confirms that   $C(r,s)\in  \Omega_e$.  
By Theorem  \ref{int-rep-classical} we obtain for $r\in \Omega$ 
\begin{align}
\mathcal J_{\mu+\nu_1+\nu_2}(r)&=\int_{ \Omega_e} \mathcal J_{\mu+\nu_1}(P(\sqrt s\,)r)
\> d\beta_{\mu+\nu_1,\nu_2}(s)  \notag\\
&=\int_{ \Omega_e}\int_{ \Omega_e} \mathcal J_{\mu}\bigl(P(\sqrt t\,)P(\sqrt s\,)r\bigr)d\beta_{\mu, \nu_1}(t)d\beta_{\mu+\nu_1,\nu_2}(s). \notag
\end{align}
On the other hand, 
\begin{align*} \int_{\Omega_e} \!\mathcal J_\mu\bigl(P(\sqrt r)y\bigr) & d\beta_{\mu,\nu_1}\circ\beta_{\mu+\nu_1, \nu_2}(y) = \\
    &= \int_{\Omega_e}\!\int_{\Omega_e} \!\mathcal J_\mu\bigl(P(\sqrt r)P(\sqrt s) t\bigr)d\beta_{\mu,\nu_1}(t)d\beta_{\mu+\nu_1, \nu_2}(s).\end{align*}
Now consider the argument of $\mathcal J_\mu$. By the polar decomposition of $G$  (Thm. III.5.1 of \cite{FK}),  
  there exist $k\in K$ and $x\in \Omega$ such that $P(\sqrt r) P(\sqrt s) P(\sqrt t) = P(x)k$ and therefore
  $$ P(\sqrt r) P(\sqrt s) t = P(\sqrt r) P(\sqrt s) P(\sqrt t)e = P(x)ke = P(x)e = x^2$$ and  
  $$ P(\sqrt t) P(\sqrt s) r = P(\sqrt t) P(\sqrt s) P(\sqrt r)e  = (P(x)k)^*e = k^{-1}P(x)e = k^{-1}x^2.$$
  As $\mathcal J_\mu$ is $K$-invariant, we obtain 
 $$  \mathcal J_{\mu+\nu_1+\nu_2}(r) = \int_{\Omega_e} \!\mathcal J_\mu\bigl(P(\sqrt r)y\bigr) d\beta_{\mu,\nu_1}\circ\beta_{\mu+\nu_1, \nu_2}(y).$$

If we compare this with Theorem  \ref{int-rep-classical} and use Theorem
\ref{injectivity}(1), the result follows.
\end{proof}

We now turn to the distributional extension of beta measures on matrix cones. We shall  apply Lemma \ref{Sokal}  to  the Jordan algebra $V,$  the relatively
compact set $U:= \Omega_e$, 
$\lambda=\nu$ and  the densities
\begin{equation}\label{f_nu}
f_\nu(x):= \frac{\Gamma_\Omega(\mu+\nu)}{
\Gamma_\Omega(\mu)\Gamma_\Omega(\nu)} \Delta(x)^{\mu-n/q}\Delta(e-x)^{\nu-n/q}
\end{equation}
 of the beta measures $\beta_{\mu,\nu}$ 
 from (\ref{def-beta-dist}) on $U$, where the index $\mu$ is suppressed.
We consider  the open half planes
$$E_k:=\{\nu\in   \mathbb C:\> \Re\nu> \mu_0-k\}, \quad k\in \mathbb N_0.$$
Note that $E_0\subset E_k \subset E_{k+1}$. It is clear 
that for fixed $\mu$ with $\,\Re\mu > \max(\mu_0,k)$ and $x\in U$, the function 
$\nu\mapsto f_\nu(x)$ 
is analytic on $E_k$. Moreover, by Lemma \ref{Sokal}(1), the mapping
\begin{equation}\label{measure_dist}E_0\,\to\,\cal D^{\prime}(V),
\quad \nu\mapsto 
\beta_{\mu,\nu}\end{equation}
is analytic for fixed $\mu$ with $\Re \mu > \mu_0$. 
 In order to apply the approach of Sokal \cite{S} and Lemma \ref{Sokal}(2),
we  construct 
 distributions $\beta_{\mu,\nu}\in\cal D^\prime(V)$
for $\nu\in E_k$. We here
 use ideas of Gindikin \cite{G1}, \cite{G2} for Riesz distributions;
 see  Ch.VII of \cite{FK}. 

\begin{theorem}\label{distribution-fortsetzung} 
Fix $k\in \mathbb N_0$ and an index $\mu\in \mathbb C$ with $\Re\mu > \mu_0 +kq +1$.
\begin{enumerate}
 \item[\rm{(1)}] For  $\nu\in E_k$ there
exists a unique  distribution $\beta_{\mu,\nu}\in\cal D^{\prime}(V)$
  such that the mapping
$$E_k\to\cal D^{\prime}(V) ,\quad \nu\mapsto 
\beta_{\mu,\nu}$$
is a (weakly) analytic extension of the mapping \eqref{measure_dist} from $E_0 $ to $ E_k$. 
\item[\rm{(2)}] The distributions  $\beta_{\mu,\nu}$ from part (1) belong to $\mathcal D^{\prime kq} (V)$ and have 
compact support which is contained in $\overline\Omega_e$. 
In particular,
$\beta_{\mu,\nu}(\varphi)$ is well-defined for each
  $\varphi\in \mathcal E(V)$ and $\nu\to \beta_{\mu, \nu}(\varphi)$ is 
  analytic on $E_k$ for fixed $\varphi\in \mathcal E(V)$.
  \item[\rm{(3)}] For each $\nu\in E_k$, the Bessel function $\mathcal J_{\mu+ \nu}$ satisfies
 \begin{equation}\label{Bessel_rep}
\mathcal J_{\mu+\nu}(r)=\beta_{\mu,\nu} (\mathcal J_\mu^{r}\,) \quad \text{for all }\, r\in \overline\Omega.\end{equation}
\end{enumerate}
\end{theorem}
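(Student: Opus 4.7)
The plan is to follow the Gindikin method for Riesz distributions, in the streamlined form of Sokal \cite{S}, adapted to the beta setting. The central tool is the Bernstein--Sato identity
$$\Delta(\partial)\Delta(y)^{s}=b(s)\Delta(y)^{s-1},\qquad b(s):=\prod_{j=1}^{q}\bigl(s-(j-1)d/2\bigr),$$
valid on $\Omega$ for all $s\in\mathbb C$ (cf.\ Ch.~VII of \cite{FK}). Substituting $y=e-x$, each partial derivative flips sign, so
$$\Delta(\partial_{x})\Delta(e-x)^{s}=(-1)^{q}b(s)\Delta(e-x)^{s-1}\quad\text{on}\ \Omega_{e}.$$

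Fix $k\in\mathbb N_{0}$ and $\mu$ with $\Re\mu>\mu_{0}+kq+1$. For $\nu\in E_{0}$ and $\varphi\in\mathcal D(V)$, I would iterate this identity $k$ times to rewrite $\Delta(e-x)^{\nu-n/q}$ as $\Delta(\partial)^{k}\Delta(e-x)^{\nu+k-n/q}$ up to a product of $b$-factors, substitute into the definition of $\beta_{\mu,\nu}(\varphi)$, and integrate by parts $k$ times against $\varphi\cdot\Delta(x)^{\mu-n/q}$. The signs $(-1)^{q}$ from the substitution and from each IBP cancel pairwise, yielding
\begin{equation}\label{planbps}
\beta_{\mu,\nu}(\varphi)=\frac{1}{B_{\Omega}(\mu,\nu)\prod_{j=1}^{k}b(\nu+j-n/q)}\int_{\Omega_{e}}\Delta(\partial)^{k}\!\bigl[\varphi(x)\Delta(x)^{\mu-n/q}\bigr]\Delta(e-x)^{\nu+k-n/q}dx.
\end{equation}
The right-hand side is locally integrable whenever $\Re\nu>\mu_{0}-k$, so \eqref{planbps} makes sense for all $\nu\in E_{k}$.

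Applying Lemma~\ref{Sokal}(2) to this extended family provides the unique weakly analytic extension $\nu\mapsto\beta_{\mu,\nu}\in\mathcal D^{\prime}(V)$ from $E_{0}$ to $E_{k}$. Since $\Delta(\partial)^{k}$ has degree $kq$, the distribution $\beta_{\mu,\nu}$ has order at most $kq$. If $\varphi$ vanishes in a neighbourhood of $\overline{\Omega_{e}}$ then so do all derivatives of $\varphi\cdot\Delta(x)^{\mu-n/q}$ on $\Omega_{e}$, so the integrand in \eqref{planbps} vanishes identically and $\mathrm{supp}\,\beta_{\mu,\nu}\subseteq\overline{\Omega_{e}}$. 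Hence $\beta_{\mu,\nu}\in\mathcal E^{\prime}(V)$ extends continuously to $\mathcal E(V)$, and analyticity of $\nu\mapsto\beta_{\mu,\nu}(\varphi)$ on $E_{k}$ for $\varphi\in\mathcal E(V)$ follows from the density of $\mathcal D(V)$ in $\mathcal E(V)$ together with Lemma~\ref{Sokal}. For part~(3), fix $r\in\overline\Omega$; since $\mathcal J_{\mu}$ is entire, $\mathcal J_{\mu}^{\,r}\in\mathcal E(V)$, so $\nu\mapsto\beta_{\mu,\nu}(\mathcal J_{\mu}^{\,r})$ is analytic on $E_{k}$. The function $\nu\mapsto\mathcal J_{\mu+\nu}(r)$ is likewise analytic on $E_{k}$ because $\Re(\mu+\nu)>\mu_{0}$ there, so $(\mu+\nu)_{\lambda}\neq 0$ for every partition $\lambda$. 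Theorem~\ref{int-rep-classical} combined with the $K$-invariance \eqref{quad} (which lets one interchange the roles of $r$ and the integration variable) yields the Sonine identity on $E_{0}$; the identity theorem propagates it to $E_{k}$, and continuity in $r$ finally extends it from $\Omega$ to $\overline\Omega$.

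The main technical obstacle is justifying the integration by parts in the derivation of \eqref{planbps}. The domain $\Omega_{e}$ has two singular boundary pieces: a ``lower'' part where $\Delta(x)\to 0$ and an ``upper'' part where $\Delta(e-x)\to 0$. Iterated IBP produces, in principle, boundary contributions on both. The upper part is controlled by the initial hypothesis $\Re\nu>\mu_{0}$, while the lower part is precisely where the sharp condition $\Re\mu>\mu_{0}+kq+1$ is used: it guarantees that $\Delta(x)^{\mu-n/q}$ together with all its partial derivatives of order less than $kq$ vanish at $\partial\Omega\cap\partial\Omega_{e}$, killing the remaining boundary terms. A further subtlety is that the rational prefactor in \eqref{planbps} can develop apparent poles in $E_{k}$ coming from zeros of $b(\nu+j-n/q)$; these must cancel against zeros of $1/B_{\Omega}(\mu,\nu)=\Gamma_{\Omega}(\mu+\nu)/(\Gamma_{\Omega}(\mu)\Gamma_{\Omega}(\nu))$ at the corresponding points, where the structure of $\Gamma_{\Omega}$ in \eqref{gamma-mult} plays a decisive role.
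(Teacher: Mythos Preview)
Your approach is essentially identical to the paper's: both shift $\Delta(\partial)^{k}$ onto $\varphi\cdot\Delta^{\mu-n/q}$ via the Bernstein--Sato relation and then extend analytically in $\nu$. The paper dispatches both of the obstacles you flag in one stroke: it observes that the zero-extension $g_{\mu}(x):=\Delta(x)^{\mu-n/q}\mathbf 1_{\Omega}(x)$ lies in $C^{kq}(V)$ precisely when $\Re\mu>\mu_{0}+kq+1$, so the integration by parts can be carried out over all of $V$ with no boundary terms whatsoever, and it rewrites your prefactor (using $\prod_{j}b(\cdot)=\Gamma_{\Omega}(\nu+k)/\Gamma_{\Omega}(\nu)$) as $\Gamma_{\Omega}(\mu+\nu)/\bigl(\Gamma_{\Omega}(\mu)\Gamma_{\Omega}(\nu+k)\bigr)$, which is manifestly holomorphic on $E_{k}$ and makes the ``apparent poles'' disappear.
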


\begin{proof}
We first note that for 
$m\in \mathbb N_0$ and  $\alpha\in\mathbb C$ with $\Re\alpha>  \mu_0 + m+1 = m+ n/q,$ the function on $V$ defined by
$$g_\alpha(x):= \left\{\begin{array}{rr} 
 \Delta(x)^{\alpha-n/q} & \quad \text{for}\quad x\in \Omega \\     
    0& \quad \text{otherwise} \end{array}\right. $$
 is contained in $C^{m}(V)$. Moreover, 
by Proposition VII.1.4 and the arguments on p.~133 of \cite{FK},
 the functions $g_\alpha$ are related to the linear differential operator
 $\Delta\bigl(\frac{\partial}{\partial x}\bigr)$ of order $q$ via
\begin{equation}\label{diff-op-rel}
\Delta\Bigl(\frac{\partial}{\partial x}\Bigr)g_\alpha =\frac{\Gamma_\Omega(\alpha)}{\Gamma_\Omega(\alpha-1)}g_{\alpha-1}\,.
\end{equation}
This leads to part (1) as follows: The case $k=0$ is trivial.
For $k\ge1$ and $\nu\in E_k$ 
 we  define a distribution $\beta_{\mu,\nu}\in \mathcal D^\prime(V)$  by 
\begin{equation}\label{cont-beta-dist}
 \beta_{\mu,\nu}(\varphi):= 
\frac{\Gamma_\Omega(\mu+\nu)}{\Gamma_\Omega(\mu)\Gamma_\Omega(\nu+k)}
\int_{V}
\Delta\Bigl(\frac{\partial}{\partial x}\Bigr)^k
\bigl(\varphi(x)g_{\mu}(x)\bigr) \cdot g_{\nu+k}(e-x)\, dx.
\end{equation}
Notice for this definition that $g_{\mu}\in C^{kq}(V)$  by our 
assumptions. Moreover, the above expression is
 analytic in $\nu\in E_k$.
It is now easy to see from (\ref{diff-op-rel}) that definition \eqref{cont-beta-dist} is consistent
with \eqref{measure_dist}. Indeed, for $\Re{\nu}+k > \mu_0 + qk +1$ we may carry out integration by parts. As
\begin{equation}\label{diff_formel} \Delta\Bigl(-\frac{\partial}{\partial x}\Bigr)^k
g_{\nu+k}(e-x) \, = \, \frac{\Gamma_\Omega(\nu + k)}{\Gamma_\Omega(\nu)} g_\nu(e-x), \end{equation}
we obtain that \eqref{cont-beta-dist} coincides with the beta measure $\beta_{\mu,\nu}$ for such $\nu$, and by analyticity with respect to $\nu$, it coincides for all $\nu \in E_0$. 
Part (2) is clear from formula (\ref{cont-beta-dist}). Finally, 
  identity \eqref{Bessel_rep} holds for all $\nu \in E_0$ according to  
  Theorem \ref{int-rep-classical},  and as both sides are analytic in $\nu \in E_k$, it extends to all $\nu\in E_k$. This  proves part (3).
\end{proof}

Similar to Riesz distributions in Theorem VII.2.2 of \cite{FK}, one can extend
analytic relations for the beta  measures (\ref{def-beta-dist})
to distributions with parameters $\mu,\nu$ as in Theorem
\ref{distribution-fortsetzung}. For instance, (\ref{def-beta-dist})
immediately leads to:

\begin{lemma}\label{prod-relation} Let $\mu,\nu\in\mathbb C$ be as in Theorem
\ref{distribution-fortsetzung} for some  $k\in \mathbb N_0$. Then
\begin{equation}\label{beta_1}\Delta(e-x)\cdot \beta_{ \mu,\nu}= 
\Bigl(\prod_{j=0}^{q-1} \frac{\nu-jd/2}{\mu+\nu-jd/2}\Bigr)\cdot \beta_{
  \mu,\nu+1}\,,\end{equation}
$$\Delta(x)\cdot \beta_{\mu,\nu}= 
\Bigl(\prod_{j=0}^{q-1} \frac{\mu-jd/2}{\mu+\nu-jd/2}\Bigr)\cdot \beta_{
  \mu+1,\nu}\,.$$
\end{lemma}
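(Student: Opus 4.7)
The plan is to verify the identities first on the range $E_0 = \{\Re\nu > \mu_0\}$ where the beta objects are genuine measures, and then to extend to all of $E_k$ by analytic continuation, invoking Theorem \ref{distribution-fortsetzung}.

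For the first step, assume temporarily that $\Re\nu > \mu_0$. From the explicit density \eqref{def-beta-dist} we read off
$$
\Delta(e-x)\, d\beta_{\mu,\nu}(x) \;=\; \frac{B_\Omega(\mu,\nu+1)}{B_\Omega(\mu,\nu)}\, d\beta_{\mu,\nu+1}(x),
$$
as an equality of compactly supported measures on $V$. Using $B_\Omega(\mu,\nu)=\Gamma_\Omega(\mu)\Gamma_\Omega(\nu)/\Gamma_\Omega(\mu+\nu)$ together with the product formula \eqref{gamma-mult} and the elementary identity
$$
\frac{\Gamma_\Omega(z+1)}{\Gamma_\Omega(z)} \;=\; \prod_{j=0}^{q-1}\bigl(z-jd/2\bigr),
$$
the prefactor simplifies to $\prod_{j=0}^{q-1}(\nu-jd/2)/(\mu+\nu-jd/2)$. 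An identical computation with the roles of $\mu$ and $\nu$ interchanged yields the second identity on $E_0$.

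For the extension to $E_k$, I view both sides as $\mathcal D^\prime(V)$-valued maps of $\nu$. The left-hand side of \eqref{beta_1} is the product of the polynomial $\Delta(e-x)\in\mathcal E(V)$ with $\beta_{\mu,\nu}$; for a test function $\varphi\in\mathcal D(V)$,
$$
\bigl(\Delta(e-x)\cdot\beta_{\mu,\nu}\bigr)(\varphi) \;=\; \beta_{\mu,\nu}\bigl(\Delta(e-\cdot)\varphi\bigr),
$$
which is analytic in $\nu\in E_k$ by Theorem \ref{distribution-fortsetzung}(2), since $\Delta(e-\cdot)\varphi\in\mathcal D(V)$. On the right-hand side, $\nu\mapsto\beta_{\mu,\nu+1}$ is analytic on $E_k$ because $\nu+1\in E_k$ whenever $\nu\in E_k$. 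The scalar prefactor is rational in $\nu$, and its denominator zeros lie at $\nu=jd/2-\mu$ with $0\le j\le q-1$; since
$$
\Re\bigl(jd/2-\mu\bigr) \;\le\; \mu_0 - \Re\mu \;<\; -kq-1 \;<\; \mu_0-k,
$$
these zeros lie outside $E_k$, so the prefactor is analytic on $E_k$. Consequently both sides of \eqref{beta_1} are weakly analytic $\mathcal D^\prime(V)$-valued functions on the connected open set $E_k$ which agree on the nonempty open subset $E_0$; by standard uniqueness of analytic continuation in the weak topology, they agree on all of $E_k$.

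The second identity follows by the same argument applied to $\Delta(x)\cdot\beta_{\mu,\nu}$; note that the shift $\mu\mapsto\mu+1$ preserves the hypothesis $\Re\mu>\mu_0+kq+1$, and the zeros $\mu-jd/2=0$ of the corresponding denominator are excluded because $\Re\mu>\mu_0\ge jd/2$. I expect no genuine obstacle; the only point requiring a little care is checking that the rational prefactors have no poles in $E_k$, which the bound $\Re\mu>\mu_0+kq+1$ comfortably guarantees.
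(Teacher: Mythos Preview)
Your proposal is correct and follows precisely the route the paper indicates: the paper does not give a detailed proof but simply remarks that the identities follow immediately from the explicit density \eqref{def-beta-dist} together with analytic continuation as in Theorem~\ref{distribution-fortsetzung}. You have supplied exactly those details, including the careful (and correct) verification that the rational prefactors have no poles on $E_k$ under the hypothesis $\Re\mu>\mu_0+kq+1$.
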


The following result concerning the existence of Sonine representations is an immediate consequence of Theorem \ref{injectivity}, Lemma \ref{Besselestim} and Theorem \ref{distribution-fortsetzung}(3).

\begin{corollary}\label{Sonineequiv} Let $k\in \mathbb N_0$ and $\Re \mu > \max(\mu_0+kq+1, 2\mu_0+1)$. Then for $\nu\in E_k$, the following are equivalent:\parskip=-1pt
 \begin{enumerate}\itemsep=-1pt
  \item[\rm{(1)}] The distribution $\beta_{\mu,\nu}$ is a complex measure.
  \item[\rm{(2)}] There exists a bounded complex measure $\beta\in M_b(\overline\Omega)$ such that $\mathcal J_{\mu+\nu}$ has the Sonine representation
  $$ \mathcal J_{\mu+\nu}(r) = \int_{\overline\Omega} \mathcal J_\mu\bigl(P\sqrt{s})\,r)d\beta(s) \quad \text{for all }\, r\in \overline\Omega.$$
 \end{enumerate}
In this case, the measure $\beta$ in (2) is unique and given by $\beta=\beta_{\mu,\nu}.$ 
\end{corollary}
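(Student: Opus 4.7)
The plan is to combine Theorem \ref{distribution-fortsetzung}(3), which identifies $\mathcal J_{\mu+\nu}$ with the distributional pairing $\beta_{\mu,\nu}(\mathcal J_\mu^{\,r})$, with the injectivity result of Theorem \ref{injectivity}(2). The hypothesis $\Re\mu > 2\mu_0+1$ is exactly what is needed for Theorem \ref{injectivity}, and $\Re\mu\geq \mu_0+1/2$ (implied by this) is what is needed for Lemma \ref{Besselestim} to guarantee that each $\mathcal J_\mu^{\,r}$, $r\in\overline\Omega$, is bounded on $V$.

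For the direction (1)$\Rightarrow$(2), I would observe that by Theorem \ref{distribution-fortsetzung}(2) the distribution $\beta_{\mu,\nu}$ has compact support contained in $\overline{\Omega_e}$. If in addition $\beta_{\mu,\nu}\in\cal D^{\prime 0}(V)$, then this compactly supported order-zero distribution is precisely a finite complex Radon measure on $V$ supported in $\overline{\Omega_e}\subset\overline\Omega$, so it belongs to $M_b(\overline\Omega)$. For such a compactly supported measure, the pairing with any $\varphi\in\cal E(V)$ coincides with integration, and hence Theorem \ref{distribution-fortsetzung}(3) yields
\[
\mathcal J_{\mu+\nu}(r)\,=\,\beta_{\mu,\nu}(\mathcal J_\mu^{\,r})\,=\,\int_{\overline\Omega}\mathcal J_\mu\bigl(P(\sqrt s\,)r\bigr)\,d\beta_{\mu,\nu}(s)
\]
for all $r\in\overline\Omega$, so (2) holds with $\beta=\beta_{\mu,\nu}$.

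For (2)$\Rightarrow$(1) together with the uniqueness and identification of $\beta$, I would apply Theorem \ref{injectivity}(2) to the compactly supported distribution $T:=\beta_{\mu,\nu}\in\cal E^\prime(V)$, whose support lies in $\overline{\Omega_e}\subset\overline\Omega$. Lemma \ref{Besselestim} supplies the boundedness of $\mathcal J_\mu^{\,r}$ on $V$ for each $r\in\overline\Omega$. Combining Theorem \ref{distribution-fortsetzung}(3) with the assumed Sonine representation, I obtain
\[
T(\mathcal J_\mu^{\,r})\,=\,\mathcal J_{\mu+\nu}(r)\,=\,\int_{\overline\Omega}\mathcal J_\mu^{\,r}(s)\,d\beta(s)
\]
for every $r\in\overline\Omega$, so the hypotheses of Theorem \ref{injectivity}(2) are met and the conclusion is $\beta_{\mu,\nu}=\beta$ as elements of $\mathcal S^\prime(\overline\Omega)$. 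In particular $\beta_{\mu,\nu}$ coincides with the bounded measure $\beta$, which proves (1) and at the same time pins down $\beta$ as $\beta_{\mu,\nu}$; uniqueness of any Sonine measure representing $\mathcal J_{\mu+\nu}$ is an automatic byproduct.

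The only delicate point is purely bookkeeping: one must identify a compactly supported order-zero distribution with a finite complex Borel measure and check that the distributional pairing against the smooth, bounded function $\mathcal J_\mu^{\,r}\in\cal E(V)$ really agrees with the measure-theoretic integral. This is standard (for a compactly supported Radon measure the extension of the pairing from $\cal D(V)$ to $\cal E(V)$ is just integration against the measure), but it is the one place where the hypothesis $\Re\mu>2\mu_0+1$ (via Lemma \ref{Besselestim} and Theorem \ref{injectivity}) is genuinely used. Once this is in place, the rest of the argument is formal manipulation of the identities provided by Theorems \ref{distribution-fortsetzung} and \ref{injectivity}.
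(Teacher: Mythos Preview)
Your proposal is correct and follows exactly the route the paper indicates: the corollary is stated there as an immediate consequence of Theorem~\ref{injectivity}, Lemma~\ref{Besselestim}, and Theorem~\ref{distribution-fortsetzung}(3), and you have simply spelled out how these three ingredients combine. The only addition is your explicit bookkeeping about identifying compactly supported order-zero distributions with finite measures, which is standard and correctly handled.
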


We now investigate for which $\nu\in E_k$ the distribution $\beta_{\mu,\nu}$
(with $\Re\mu > \mu_0 + kq +1$) is actually a complex measure, i.e. contained in $\cal D^{\prime0}(V),$ or  even  a  positive measure. 

  It is well known (see Section VII.3 of \cite{FK}) that the Riesz distributions, 
which are given for 
$\Re{\alpha} > \mu_0$ by 
$$ R_\alpha(\varphi) = \frac{1}{\Gamma_\Omega(\alpha)}\int_{V} \varphi(x)g_\alpha(x)\,dx,$$
have a (weakly) analytic extension to distributions $R_\alpha$ for all $\alpha \in \mathbb C$. They are tempered and supported in $\overline\Omega.$
Moreover, the distribution $R_\alpha$ is a positive measure exactly if $\alpha $ belongs to the Wallach set
$$ \big\{0,\frac{d}{2},\ldots,(q-1)\frac{d}{2}= \mu_0\}\, \cup\, ]\mu_0,\infty[\,.$$
A simple proof for the necessity of this condition is given in \cite{S}. By the same method, it is also shown there that 
$R_\alpha$ is a locally finite complex Borel measure exactly if
$\alpha$ belongs to the set 
$$ W_{q,d}:= \big\{0,\frac{d}{2},\ldots,(q-1)\frac{d}{2}\big\}\, \cup\, E_0.$$
The following sufficient condition for beta distributions is
a consequence of the known results for Riesz distributions.

\begin{theorem}\label{suff1}
Let $k\in \mathbb N_0$, $\Re\mu > \mu_0+kq+1$, and let
$\nu\in E_k\cap W_{q,d}$.
Then  $\beta_{\mu,\nu}$ belongs to $\cal D^{\prime 0}(V),$ i.e. $\beta_{\mu, \nu}$ is a compactly supported complex Borel measure. In particular, 
$\beta_{\mu,0}=\delta_{e}$, provided that $0\in E_k$.

If in addition $\mu$ and $\nu$ are real, then $\beta_{\mu,\nu}$ is a probability measure. 
\end{theorem}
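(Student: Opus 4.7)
The plan is to realize $\beta_{\mu,\nu}$ as the product of the continuous density $g_\mu$ with the pullback of the Riesz measure $R_\nu$ under the involution $\tau(y):=e-y$. For $\varphi\in\cal D(V)$ and $\Re\nu>\mu_0$, the change of variables $y=e-x$ in the defining integral \eqref{def-beta-dist} yields the key identity
\begin{equation*}
\beta_{\mu,\nu}(\varphi)=\frac{\Gamma_\Omega(\mu+\nu)}{\Gamma_\Omega(\mu)}\,R_\nu(\psi_\varphi),\qquad \psi_\varphi(y):=\varphi(e-y)\,g_\mu(e-y).
\end{equation*}

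To propagate this identity to $\nu\in E_k$, I would use that $\Re\mu>\mu_0+kq+1$ forces $g_\mu\in C^{kq}(V)$, so $\psi_\varphi\in C^{kq}_c(V)$. On the other hand, for $\nu\in E_k$ one has $\nu+k\in E_0$, and iterating the standard relation $R_{\alpha-1}=\Delta(\partial/\partial x)R_\alpha$ (obtained from \eqref{diff-op-rel} by integration by parts) gives $R_\nu=\Delta(\partial/\partial x)^k R_{\nu+k}$ with $R_{\nu+k}$ a Radon measure. Thus $R_\nu$ is a distribution of order at most $kq$ whose action on $C^{kq}_c(V)$ depends analytically on $\nu\in E_k$. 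Hence the right-hand side of the displayed identity furnishes a weakly analytic $\cal D^\prime(V)$-valued extension of $\nu\mapsto\beta_{\mu,\nu}$ from $E_0$ to $E_k$, and by the uniqueness statement in Theorem \ref{distribution-fortsetzung}(1) the identity persists throughout $E_k$.

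Now fix $\nu\in E_k\cap W_{q,d}$, so that $R_\nu$ is a complex Radon measure supported in $\overline\Omega$. Then $\tau_\ast R_\nu$ is a complex Radon measure supported in $e-\overline\Omega=\{x:x\le e\}$. Since $g_\mu$ vanishes off $\overline\Omega$, the expression
$$\frac{\Gamma_\Omega(\mu+\nu)}{\Gamma_\Omega(\mu)}\,g_\mu(x)\,d(\tau_\ast R_\nu)(x)$$
defines a bounded complex Borel measure on $V$ with support in the compact set $\overline\Omega\cap\{x:x\le e\}=\overline{\Omega_e}$. Because this measure represents the distribution $\beta_{\mu,\nu}$ by the identity above, we conclude $\beta_{\mu,\nu}\in\cal D^{\prime 0}(V)$. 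In the special case $\nu=0$ (provided $0\in E_k$), we have $R_0=\delta_0$, so $\tau_\ast R_0=\delta_e$ and hence $\beta_{\mu,0}=g_\mu(e)\,\delta_e=\Delta(e)^{\mu-n/q}\delta_e=\delta_e$, using $\Delta(e)=1$.

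Finally, if $\mu$ and $\nu$ are real with $\nu\in W_{q,d}$, then $\nu$ lies in the Wallach set, so $R_\nu$ is a positive measure. Since $g_\mu\ge 0$ on $V$ and $\Gamma_\Omega(\mu+\nu),\Gamma_\Omega(\mu)>0$, the measure $\beta_{\mu,\nu}$ is positive. Its total mass is $\beta_{\mu,\nu}(\mathbf 1)$, which by Theorem \ref{distribution-fortsetzung}(3) at $r=0$, combined with $\mathcal J_\mu^{\,0}\equiv\mathcal J_\mu(0)=1$, equals $\mathcal J_{\mu+\nu}(0)=1$, whence $\beta_{\mu,\nu}$ is a probability measure. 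The only delicate step is the construction of the analytic extension of $R_\nu$ to $C^{kq}_c$-test functions and the corresponding order bound; this is precisely the technical content of Gindikin's theory of Riesz distributions and transfers directly to our setting via the identity $(\ast)$.
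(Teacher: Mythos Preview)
Your proof is correct and follows essentially the same strategy as the paper: both identify $\beta_{\mu,\nu}$ with $\frac{\Gamma_\Omega(\mu+\nu)}{\Gamma_\Omega(\mu)}\,g_\mu\cdot(\tau_\ast R_\nu)$ (the paper writes this as $R_\nu^\theta(\varphi g_\mu)$), use that $R_\nu$ is a measure for $\nu\in W_{q,d}$ and that $g_\mu\in C^{kq}(V)$, and deduce the normalization $\beta_{\mu,\nu}(\mathbf 1)=1$ by analyticity. The only cosmetic difference is that the paper verifies the key identity directly from the defining formula \eqref{cont-beta-dist} via an approximation of $\varphi g_\mu\in C_c^{kq}(V)$ by test functions, whereas you first check it on $E_0$ by a change of variables and then invoke the uniqueness of the analytic extension in Theorem~\ref{distribution-fortsetzung}(1).
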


\begin{proof} For the normalization, recall from Theorem \ref{distribution-fortsetzung}   
that $\nu \to \beta_{\mu, \nu}(1)$ is analytic on $E_k$.
Therefore $\beta_{\mu,\nu}(1) = 1$ for all $\nu \in E_k.$

 Now let $\nu \in E_k\cap W_{q,d}$. Then 
the distribution
$$ \Delta\bigl(\frac{\partial}{\partial x}\bigr)^k g_{\nu + k}\, = 
\Gamma_\Omega(\nu +k) R_\nu\,$$
is  a locally finite complex Borel measure.
We claim that for $\varphi\in \mathcal D(V),$ 
\begin{equation}\label{beta_identity}\beta_{\mu,\nu}(\varphi) = \frac{\Gamma_\Omega(\mu+\nu)}{\Gamma_\Omega(\mu)}\cdot R_\nu^\theta(\varphi g_\mu),
 \end{equation}
where $R_\nu^\theta$ denotes the image measure (pushforward) of the Riesz measure $R_\nu$ under the mapping $\theta: V\to V, x\mapsto e-x$. 
Indeed, for $\psi\in \mathcal D(V)$ we have
\[ \Gamma_\Omega(\nu + k)R_\nu^\theta (\psi) = \Bigl(\Delta\bigl(\frac{\partial}{\partial x}\bigr)^k g_{\nu+k}\Bigr) 
(\psi\circ \theta) = \int_{V} \Delta\bigl(\frac{\partial}{\partial x}\bigr)^k \psi(x) \cdot g_{\nu+k}(e-x) dx.\]
An approximation argument shows that this identity also holds for $\psi\in C_c^{kq}(V),$ as we may approximate $\psi$ by a net $(\psi_\epsilon)_{\epsilon>0}
\subseteq \mathcal D(V)$ such that 
$\,\partial^\alpha\psi_\epsilon \to \partial^\alpha \psi$ uniformly on $V$ for all $|\alpha|\leq kq$ and the supports of the $\psi_\epsilon$ stay in a fixed relatively compact neighborhood of $\text{supp}\,\psi$.
Putting $\psi = \varphi g_\mu \in C_c^{kq}(V)$ and using formula \eqref{cont-beta-dist}, we thus obtain \eqref{beta_identity}. 
From identity \eqref{beta_identity} it is now obvious  that $\beta_{\mu,\nu}$ is a complex measure which is even positive if $\mu, \nu$ are real. 
As $R_0 = \delta_0$, it is also  immediate
that $\beta_{\mu, 0} = \delta_e$. 

\end{proof}

\begin{remark} (1)\enskip
The supports of the Riesz measures $R_\nu$ 
 with  
$\nu \in W_{q,d}\,$ are known (see Propos.VII.2.3 of \cite{FK}).
 Identity (\ref{beta_identity}) then easily gives the supports 
of the corresponding measures $\beta_{\mu, \nu}.$ In particular, $\beta_{\mu,\nu}$ is a point measure only if $\nu = 0.$

(2) \enskip Theorem \ref{suff1} is in accordance with Proposition \ref{group-case} in the group cases for $\mu$ 
 sufficiently large.
It is not clear whether for small parameters  $\mu= pd/2$ and $\nu=\widetilde pd/2$,
the probability measures $\widetilde\beta_{\mu,\nu}$
from Proposition \ref{group-case} can be obtained as distributions via analytic extension as above.
Nevertheless, we shall from now on denote  $\widetilde\beta_{\mu,\nu}$ by $\beta_{\mu,\nu}$.

\end{remark}

We are now aiming at necessary conditions on the indices under which the beta distributions $\beta_{\mu, \nu}$ on a symmetric cone are actually measures. 
Such conditions will also imply that
 the existence of an integral representation as in the above corollary 
requires non-trivial restrictions on the indices of the Bessel functions involved. As a first step, 
we extend Lemma \ref{connection-beta} for beta measures 
to a larger set of parameters for which the involved beta distributions are
measures according to Theorem \ref{suff1} or Proposition \ref{group-case}. 
The same proof as in Lemma 
\ref{connection-beta} implies:

\begin{lemma}\label{connection-beta-cont}
Let $\Re\mu> 2\mu_0+1$ and 
$\nu_1,\nu_2\in \mathbb C$ be 
such that the beta measures $\beta_{\mu,\nu_1},$
$\beta_{\mu+\nu_1,\nu_2},\,\beta_{\mu,\nu_1+\nu_2} $ exist. Then, in notation of
 Lemma \ref{connection-beta},
$$\beta_{\mu,\nu_1}\circ\beta_{\mu+\nu_1,\nu_2}:= C(\beta_{\mu,\nu_1}\otimes\beta_{\mu+\nu_1,\nu_2})\,
=\beta_{\mu,\nu_1+\nu_2}.$$
\end{lemma}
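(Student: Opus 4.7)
The proof will follow the same pattern as Lemma \ref{connection-beta}, with Theorem \ref{int-rep-classical} replaced by whichever Sonine representation is available for the three measures involved. I would first observe that, for each of the index pairs $(\mu,\nu_1)$, $(\mu+\nu_1,\nu_2)$, $(\mu,\nu_1+\nu_2)$, one has
$$
\mathcal J_{\mu'+\nu}(r) \,=\, \int_{\overline\Omega} \mathcal J_{\mu'}\bigl(P(\sqrt{s}\,)r\bigr)\, d\beta_{\mu',\nu}(s),\quad r\in \overline\Omega,
$$
where the Sonine identity comes from Theorem \ref{int-rep-classical} in the classical range $\Re\nu>\mu_0$, from Theorem \ref{distribution-fortsetzung}(3) when $\beta_{\mu',\nu}$ arises via the analytic extension and is in addition a measure, and from the defining property of $\widetilde\beta_{\mu,\nu}$ in Proposition \ref{group-case} in the remaining group-theoretic cases (under the convention introduced in the subsequent remark). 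In every case $\beta_{\mu',\nu}$ is a bounded complex measure with compact support in $\overline\Omega$.

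Iterating this representation, for $r\in\overline\Omega$,
\begin{align*}
\mathcal J_{\mu+\nu_1+\nu_2}(r)
&= \int_{\overline\Omega}\mathcal J_{\mu+\nu_1}\bigl(P(\sqrt{s}\,)r\bigr)\,d\beta_{\mu+\nu_1,\nu_2}(s)\\
&= \int_{\overline\Omega}\int_{\overline\Omega}\mathcal J_{\mu}\bigl(P(\sqrt{t}\,)P(\sqrt{s}\,)r\bigr)\,d\beta_{\mu,\nu_1}(t)\,d\beta_{\mu+\nu_1,\nu_2}(s).
\end{align*}
Fubini's theorem applies here, since $\Re\mu>2\mu_0+1\geq \mu_0+1/2$, so Lemma \ref{Besselestim} bounds $\mathcal J_\mu$ uniformly on $V$, and both measures are bounded. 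The polar decomposition argument from the proof of Lemma \ref{connection-beta} (choose $k\in K$ and $x\in\Omega$ with $P(\sqrt{r}\,)P(\sqrt{s}\,)P(\sqrt{t}\,)=P(x)k$) together with the $K$-invariance of $\mathcal J_\mu$ gives
$\mathcal J_\mu\bigl(P(\sqrt{t}\,)P(\sqrt{s}\,)r\bigr)=\mathcal J_\mu\bigl(P(\sqrt{r}\,)C(t,s)\bigr)$, so the iterated integral equals
$\int_{\overline\Omega}\mathcal J_\mu\bigl(P(\sqrt{r}\,)y\bigr)\,d(\beta_{\mu,\nu_1}\circ\beta_{\mu+\nu_1,\nu_2})(y)$.

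The direct Sonine representation of $\mathcal J_{\mu+\nu_1+\nu_2}$ by $\beta_{\mu,\nu_1+\nu_2}$ yields the same value. Hence the compactly supported complex measure
$T := \beta_{\mu,\nu_1}\circ\beta_{\mu+\nu_1,\nu_2}-\beta_{\mu,\nu_1+\nu_2}$, which is supported in $\overline\Omega$, satisfies $T(\mathcal J_\mu^{\,r})=0$ for every $r\in\overline\Omega$. Since $\Re\mu>2\mu_0+1$, Theorem \ref{injectivity}(1) forces $T=0$, which is the claim. The step I expect to need the most care is to verify uniformly across the three possible provenances of the beta measures that the Sonine formula indeed holds with $\beta_{\mu',\nu}$ as the representing measure; once this is established, the rest is a direct transcription of the proof of Lemma \ref{connection-beta}.
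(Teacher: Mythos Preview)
Your proposal is correct and follows exactly the approach the paper intends: the paper's ``proof'' is simply the sentence ``The same proof as in Lemma~\ref{connection-beta} implies,'' and you have spelled out precisely that argument, iterating the Sonine representation for the three measures and concluding via Theorem~\ref{injectivity}(1). Your added care in tracking which of Theorem~\ref{int-rep-classical}, Theorem~\ref{distribution-fortsetzung}(3), or Proposition~\ref{group-case} supplies the Sonine identity for each measure is appropriate and more explicit than the paper itself.
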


We do not know whether it is possible to derive a converse statement of Theorem \ref{suff1}
 by following
the approach of Gindikin for Riesz distributions; see
Section VII.3 of \cite{FK}. We use a different approach by Sokal \cite{S} (specifically, Lemma \ref{Sokal}), by which we
easily obtain the following result:

\begin{theorem}\label{not1}
Let $k\in \mathbb N_0$, $\Re \mu > \mu_0 +kq+1$, and
$\nu\in E_k$. If   $\beta_{\mu,\nu}\in\cal D^{\prime0}(H_q)$,
i.e.,  $\beta_{\mu,\nu}$ is a complex measure, 
then 
$$\nu\in \Bigl(\bigl\{0, \frac{d}{2}, \ldots, (q-1)\frac{d}{2}\big\} -\mathbb N_0\Bigr)\, \cup E_0\,.$$
In particular, $\nu+l\in W_{q,d}$ for some $l\in \mathbb N_0$.
\end{theorem}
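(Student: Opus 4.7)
The key tool will be Sokal's Lemma \ref{Sokal}(2), applied with $U=\Omega_e$, $D=E_k$, $D_0=E_0$, and the densities $f_\nu$ of \eqref{f_nu}. First I will verify that the hypotheses of that lemma are satisfied: since $\nu\mapsto 1/B_\Omega(\mu,\nu)=\Gamma_\Omega(\mu+\nu)/\bigl(\Gamma_\Omega(\mu)\Gamma_\Omega(\nu)\bigr)$ is entire in $\nu$ (as $1/\Gamma_\Omega$ is entire), the map $(x,\nu)\mapsto f_\nu(x)$ is continuous on $\Omega_e\times E_k$ and analytic in $\nu$. Assuming $\beta_{\mu,\nu}\in\cal D^{\prime 0}(V)$, the lemma then yields $f_\nu\in L^1_{\mathrm{loc}}(\overline{\Omega_e})$, and the whole proof reduces to characterizing this integrability condition.

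The only place where local integrability can fail is near the ``upper'' boundary $\{x\in\overline{\Omega_e}:\Delta(e-x)=0\}$, since under the assumption $\Re\mu>\mu_0+kq+1$ the factor $\Delta(x)^{\mu-n/q}$ (extended by $0$) is $C^{kq}$ on $V$, in particular smooth and bounded away from $0$ near $x=e$. I will therefore focus on the point $x=e\in\overline{\Omega_e}$ and perform the affine change of variables $y=e-x$, which has Jacobian $\pm 1$ and carries a neighborhood of $e$ in $\overline{\Omega_e}$ to a neighborhood of $0$ in $\overline\Omega$. Since $\Delta(x)^{\mu-n/q}\to 1$ at $x=e$, local integrability of $f_\nu$ at $e$ is equivalent to local integrability of
\[
\frac{1}{B_\Omega(\mu,\nu)}\,\Delta(y)^{\nu-n/q}
\]
at $y=0$ in $\overline\Omega$.

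At this point the argument splits into two cases. If $\nu\in\{0,d/2,\ldots,(q-1)d/2\}-\mathbb N_0$, then $\Gamma_\Omega(\nu)$ has a pole (by \eqref{gamma-mult}) while $\Gamma_\Omega(\mu)$ and $\Gamma_\Omega(\mu+\nu)$ are both finite and nonzero (the former because $\Re\mu$ is large, the latter because $\Gamma_\Omega$ has no zeros), so $1/B_\Omega(\mu,\nu)=0$. Then $f_\nu\equiv 0$ on $\Omega_e$ and the integrability condition is automatic. Otherwise $1/B_\Omega(\mu,\nu)\neq 0$, and the requirement becomes convergence of $\int_{\{y\in\Omega:|y|<\varepsilon\}}\Delta(y)^{\Re\nu-n/q}\,dy$. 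This is the classical Riesz integral, which converges iff $\Re\nu>\mu_0$; I will verify this divergence for $\Re\nu\le\mu_0$ either by directly quoting Sokal's analysis in \cite{S} or via the polar decomposition $y=k\sum\xi_ic_i$, using a positive bump function to rule out any cancellation.

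Putting the two cases together gives exactly the dichotomy $\nu\in\bigl(\{0,d/2,\ldots,(q-1)d/2\}-\mathbb N_0\bigr)\cup E_0$. The ``in particular'' assertion then follows by taking $l=0$ in the $E_0$ case and $l=l_0$ when $\nu=jd/2-l_0$. The main technical obstacle I anticipate is the sharp divergence statement in the second case: although morally classical, some care is needed to exclude local integrability precisely at all non-Wallach points with $\Re\nu\le\mu_0$, as opposed to merely generic ones.
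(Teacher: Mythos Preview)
Your proposal is correct and follows essentially the same route as the paper: apply Lemma~\ref{Sokal}(2) with $U=\Omega_e$, $D=E_k$, $D_0=E_0$ to deduce $f_\nu\in L^1_{\mathrm{loc}}(\overline{\Omega_e})$, and then observe that the unnormalized density $\Delta(x)^{\mu-n/q}\Delta(e-x)^{\nu-n/q}$ is locally integrable on $\overline{\Omega_e}$ precisely when $\Re\nu>\mu_0$, forcing the prefactor $\Gamma_\Omega(\mu+\nu)/\bigl(\Gamma_\Omega(\mu)\Gamma_\Omega(\nu)\bigr)$ to vanish otherwise. The paper simply cites Lemma~3.4 of \cite{S} for the integrability dichotomy rather than carrying out your change of variables $y=e-x$ explicitly, and your closing concern about ``non-Wallach points'' is unnecessary: for the \emph{unnormalized} density the integrability criterion depends only on $\Re\nu$, with no discrete exceptions.
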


\begin{proof}
We apply Lemma \ref{Sokal}(2) to $D_0:=E_0$, $D:=E_k$, and $U=\Omega_e$ and
obtain that the beta density $f_\nu$ given by \eqref{f_nu} belongs to 
$ L^1_{loc}(\overline{\Omega_e})$. It is well-known that
$$x\mapsto \Delta(x)^{\mu-n/q}\Delta(e-x)^{\nu-n/q}$$
is contained in $L^1_{loc}(\overline{\Omega_e})$ precisely for $\nu\in E_0$; see
for instance Lemma 3.4 of \cite{S}. Therefore either  $\nu\in E_0$ or
$$\frac{\Gamma_\Omega(\mu+\nu)}{\Gamma_\Omega(\mu)\Gamma_\Omega(\nu)}=0,$$
where the latter just means that $\Gamma_\Omega$ has a pole in $\nu$.
\end{proof}

We conjecture that under the conditions of Theorem \ref{not1}, it should be even true, similar as for Riesz distributions, that $\nu\in W_{q,d}.$
Our next statement confirms this conjecture
 under the assumption that $d\in\{1,2\}.$ This covers the important case of the matrix cones $\Omega_q(\mathbb F)$ over $\mathbb F=\mathbb R$ or $\mathbb C$,
 as well as the Lorentz cones 
 $\Lambda_3$ and $\Lambda_4$. 

\begin{theorem}\label{reell_komplex_Lemma}
Suppose that $d\in\{1,2\}$. Let $k\in \mathbb N$, $\Re \mu > \mu_0 +kq+3/2$ and
$\nu\in E_k$. 
If   $\beta_{\mu,\nu}$ is a complex measure, then $\, \Re\nu \geq 0.$ 
If in addition $\mu$ is real and $\beta_{\mu,\nu}$ is a positive measure, then $\,\nu\in[0,\infty[$.

\end{theorem}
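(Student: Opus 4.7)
The plan is to combine the necessary condition from Theorem~\ref{not1} with an iteration of the product identity from Lemma~\ref{prod-relation}, using the support properties of the Wallach-point measures $\beta_{\mu, jd/2}$ to obtain a contradiction when $\Re\nu<0$.

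\textit{Reduction.} By Theorem~\ref{not1}, if $\beta_{\mu,\nu}$ is a complex measure then either $\nu\in E_0$ (in which case $\Re\nu>\mu_0\geq 0$ and we are done) or $\nu$ is a pole of $\Gamma_\Omega$ with $\Re\nu\leq\mu_0$. I would now assume $\Re\nu<0$ and seek a contradiction. For $d\in\{1,2\}$ these negative poles are precisely the elements of $-\mathbb N$ when $d=2$ and of $-\tfrac{1}{2}\mathbb N$ when $d=1$, and in all cases one finds a least $m_0\geq 1$ with $\nu+m_0\in\{0,d/2,\ldots,(q-1)d/2\}$; concretely $\nu+m_0=0$ for $\nu\in-\mathbb N$, and $\nu+m_0=1/2$ for $\nu\in-\tfrac{1}{2}-\mathbb N_0$ in the $d=1$ case.

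\textit{Iteration and the landing measure.} Iterating Lemma~\ref{prod-relation} $m_0$ times yields the distributional identity
\[ \Delta(e-x)^{m_0}\,\beta_{\mu,\nu}\;=\;C_{m_0}\,\beta_{\mu,\nu+m_0},\qquad C_{m_0}:=\prod_{i=0}^{m_0-1}\prod_{j=0}^{q-1}\frac{\nu+i-jd/2}{\mu+\nu+i-jd/2}. \]
A routine factor-by-factor check shows $C_{m_0}\neq 0$: every numerator factor is strictly negative, and every denominator factor has real part bounded below by $\Re\mu-k>0$, which is where the strengthened hypothesis $\Re\mu>\mu_0+kq+3/2$ is used. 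On the other hand, Theorem~\ref{suff1} together with identity~\eqref{beta_identity} writes $\beta_{\mu,jd/2}=(\text{const})\cdot g_\mu\,R_{jd/2}^\theta$, and since $R_{jd/2}$ is supported on matrices of rank $\leq j\leq q-1$, the landing measure $\beta_{\mu,\nu+m_0}$ is a nonzero measure concentrated on the algebraic hypersurface $S:=\{x\in V:\Delta(e-x)=0\}$ (this also covers $\nu+m_0=0$, since $\beta_{\mu,0}=\delta_e$ and $\Delta(0)=0$).

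\textit{Contradiction.} If $\beta_{\mu,\nu}$ were a complex measure, the identity above would be an equality of measures. Testing against $\varphi\in\mathcal D(V\setminus S)$, the right-hand side vanishes because $\text{supp}(\beta_{\mu,\nu+m_0})\subset S$, hence $\int\Delta(e-x)^{m_0}\varphi\,d\beta_{\mu,\nu}=0$; since $\Delta(e-x)^{m_0}$ is strictly positive on $V\setminus S$, a Jordan-decomposition argument forces $\beta_{\mu,\nu}$ to vanish on $V\setminus S$, i.e.\ $\beta_{\mu,\nu}$ is itself concentrated on $S$. But then the integrand on the left vanishes on $\text{supp}(\beta_{\mu,\nu})$ for \emph{every} $\varphi\in\mathcal D(V)$, so the identity yields $C_{m_0}\beta_{\mu,\nu+m_0}=0$, contradicting $C_{m_0}\neq 0$ and $\beta_{\mu,\nu+m_0}\neq 0$. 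This proves part~(a). The key insight, and what I expect to be the main obstacle to discover, is that for $d\in\{1,2\}$ the iteration always lands at a Wallach point whose associated measure is trapped on the codimension-one hypersurface $S$; once this support constraint is recognized, the argument closes itself.

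\textit{Part (b).} If $\mu\in\mathbb R$ and $\beta_{\mu,\nu}$ is positive, then $\overline{\beta_{\mu,\nu}}=\beta_{\mu,\nu}$. Conjugating the defining expression~\eqref{cont-beta-dist} for real $\mu$ gives $\overline{\beta_{\mu,\nu}}=\beta_{\mu,\bar\nu}$; combining with the Sonine representation in Theorem~\ref{distribution-fortsetzung}(3) and the fact that $\alpha\mapsto\mathcal J_\alpha$ is injective (distinguished by the coefficient $-1/\alpha$ of $\text{tr}(x)$ in its power series at the origin) forces $\bar\nu=\nu$, so $\nu\in\mathbb R$, and together with part~(a) we obtain $\nu\in[0,\infty)$.
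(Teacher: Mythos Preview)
Your argument is correct and takes a genuinely different route from the paper's. The paper argues via the \emph{composition} structure: it observes that for $d\in\{1,2\}$ and $\nu<0$ a pole of $\Gamma_\Omega$, the index $-\nu$ lies in $W_{q,d}$, so $\beta_{\mu+\nu,-\nu}$ is a measure by Theorem~\ref{suff1}; then Lemma~\ref{connection-beta-cont} gives $\beta_{\mu,\nu}\circ\beta_{\mu+\nu,-\nu}=\beta_{\mu,0}=\delta_e$, and a support analysis of this identity on $\overline{\Omega_e}\times\overline{\Omega_e}$ forces both factors to be point masses at $e$, contradicting $\nu\neq 0$. Your proof instead iterates the \emph{multiplication} identity of Lemma~\ref{prod-relation} until $\nu+m_0$ lands in the discrete Wallach set, and then exploits that the landing measure is supported on the hypersurface $\{\Delta(e-x)=0\}$ to force a vanishing which contradicts $C_{m_0}\neq 0$. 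Both proofs hinge on the same arithmetic fact that for $d\in\{1,2\}$ an integer shift of the bad $\nu$ hits the discrete Wallach set; what differs is which functional equation of the $\beta_{\mu,\nu}$ is used to close the argument. Your approach is somewhat more elementary in that it avoids Lemma~\ref{connection-beta-cont} (and hence the Hankel transform injectivity of Theorem~\ref{injectivity}); as a byproduct, your computation for $C_{m_0}\neq 0$ actually only needs $\Re\mu>\mu_0+kq+1$, so you are implicitly proving a marginally sharper statement than required.

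One small imprecision: the claim that $\Delta(e-x)^{m_0}$ is strictly positive on $V\setminus S$ is false as written, since $\Delta(e-x)$ can be negative off $\overline{\Omega_e}$. What makes the Jordan-decomposition step work is that $\mathrm{supp}\,\beta_{\mu,\nu}\subset\overline{\Omega_e}$ by Theorem~\ref{distribution-fortsetzung}(2), and on $\overline{\Omega_e}\setminus S$ one has $e-x\geq 0$ with $\Delta(e-x)\neq 0$, hence $\Delta(e-x)^{m_0}>0$ there. With this restriction made explicit, the total-variation argument goes through. For part~(b), your argument via $\overline{\beta_{\mu,\nu}}=\beta_{\mu,\bar\nu}$ and the linear coefficient $-1/\alpha$ of $\mathcal J_\alpha$ is a clean explicit version of what the paper records in one line.
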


\begin{proof} Notice first in the present situation, $\mu_0+kq+3/2 > 2 \mu_0 +1$. Now suppose that $\beta_{\mu,\nu}$ is a complex measure. In view of Theorem \ref{not1} it  suffices to
  consider $\nu=rd/2 -l$ with $r=0,\ldots,q-1$ and $l>0$ an integer.
  We may also assume that $\mu_0-k < \nu \leq \mu_0-k+1,$ and therefore
  $\nu = \mu_0-k +\alpha $ with $\alpha\in \{1/2, 1\}.$
  We now assume that $\nu <0$ and claim that 
  $\beta_{\mu+\nu, -\nu}$ is a complex measure. 
  In fact, our assumptions imply that 
  $$ \Re(\mu + \nu) > (\mu_0+kq +3/2)  + (\mu_0 -k+ 1/2) = 2\mu_0 +k(q-1) + 2\, > \mu_0\,.$$
  If $\,-\nu >\mu_0$,
   our claim is obvious.  Let us consider the case $\,-\nu\leq \mu_0$. Then   $\,-\nu\in E_{2\mu_0 +2-k}$ with
  $2\mu_0+2-k \in \mathbb N$. As $\nu<0$ and $\mu_0, \nu\in \frac{1}{2}\mathbb N_0$,  it follows that  $k\geq \mu_0 +1$ and
 therefore
 $$ \Re(\mu + \nu) >   2\mu_0 +k(q-1) + 2 \geq 2\mu_0 + (\mu_0+1)(q-1) +2  \geq \,\mu_0 + q(2\mu_0+2-k) +1\,.$$
  Moreover, as $0<-\nu \leq \mu_0= (q-1)d/2$ and $d=1$ or $d=2$, we conclude that
  $\,-\nu \in \{d/2, \ldots, (q-1)d/2\}\, \subset W_{q,d}.$ (Here the assumption $d\in \{1,2\}$ has been used for the first time). 
  We may now apply Theorem \ref{suff1} to the pair $(\mu+\nu,-\nu)$  and obtain again that $\beta_{\mu+\nu, -\nu}$ is a complex measure. Notice also that
  $\beta_{\mu,0}$ is a complex measure because $0\in E_k$ according to our assumptions. 
  Thus by Lemma \ref{connection-beta-cont},
  \begin{equation}\label{supports} \beta_{\mu,\nu}\circ\beta_{\mu+\nu, -\nu} = \beta_{\mu, 0} = \delta_e\,.\end{equation}
  On the other hand, the support of the measure $\,\beta_{\mu,\nu}\circ\beta_{\mu+\nu, -\nu}$      is given by 
  $$\{P(\sqrt{s}\,)r: r \in \text{supp}\,\beta_{\mu+\nu, -\nu}\,, \, s\in \text{supp}\, \beta_{\mu,\nu}\}.$$ If $P(\sqrt{s})r =e $ with
  $0\leq r,s \leq e$, then $r=s=e.$  Identity \eqref{supports} therefore implies that 
  $\,\text{supp}\,\beta_{\mu, \nu} = \text{supp} \,\beta_{\mu+\nu, -\nu} = \{e\}$, which is possible only if $\nu = 0$. 
  This contradicts our assumption and proves the first statement. 
  
 If in addition $\mu$ is real and $\beta_{\mu,\nu}$ is a positive measure, then it is clear from Theorem \ref{distribution-fortsetzung}(3) that
  $\nu$ is real. This shows the second statement.

\end{proof}
The argument above relies on the condition $d\in \{1,2\},$ and we do not know whether Theorem \ref{reell_komplex_Lemma} extends to 
larger Peirce constants. 
Let us summarize our results for $d\in \{1,2\}$. 

\begin{corollary}\label{summary1}
Suppose $\,d\in\{1,2\}$. Let $k\in \mathbb N$  and 
$\Re\mu> \mu_0 + kq + 3/2.$  Then for $\nu\in E_k$, the following statements are
equivalent:
\begin{enumerate}\itemsep=-1pt
 \item[\rm{(1)}] $\beta_{\mu,\nu}$ is a complex measure;
 \item[\rm{(2)}]  $\nu \in W_{q,d}$;
\item[\rm{(3)}] There exists a bounded complex measure $\beta\in M_b(\overline\Omega)$ such that 
$$ \mathcal J_{\mu + \nu}(r) = \int_{\overline\Omega} \mathcal J_\mu(rs) d\beta(s) \quad \text{ for all } r\in \overline\Omega\,.$$
\end{enumerate}
If $\mu$ is real with $\mu> \mu_0 + kq + 3/2,$  then for $\nu\in E_k$ the following are
equivalent:

\begin{enumerate}\itemsep=-1pt
 \item[\rm{(1)}] $\beta_{\mu,\nu}$ is a positive measure;
 \item[\rm{(2)}]  $\nu$ is contained in the Wallach set
$$\Bigl\{0,\frac{d}{2},\ldots,(q-1)\frac{d}{2}=\mu_0\Bigr\}\,\cup\, ]\mu_0,\infty[\,;$$ 
\item[\rm{(3)}] There exists a  probability measure $\beta\in M^1(|overline\Omega)$ such that 
$$ \mathcal J_{\mu + \nu}(r) = \int_{\overline\Omega} \mathcal J_\mu(rs) d\beta(s) \quad \text{ for all } r\in \overline\Omega\,.$$
\end{enumerate}
In both cases, the measure $\beta$ in $(3)$ is unique and given by  $\beta_{\mu, \nu}$. 

\end{corollary}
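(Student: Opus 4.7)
The plan is to assemble the result from components already in place: the corollary is essentially a packaging theorem. I would first verify that the standing hypothesis $\Re\mu>\mu_0+kq+3/2$ with $k\geq 1$ and $d\in\{1,2\}$ is strong enough to feed into every earlier result of this section. The inequality $\Re\mu>\mu_0+kq+1$ required by Theorem \ref{suff1} is immediate, while $\Re\mu>2\mu_0+1$, needed for the Hankel injectivity in Corollary \ref{Sonineequiv}, follows from $2\mu_0+1=d(q-1)+1\leq 2q-1<\mu_0+q+3/2$ for $d\in\{1,2\}$. Hence Theorems \ref{suff1}, \ref{not1}, \ref{reell_komplex_Lemma} and Corollary \ref{Sonineequiv} are all at our disposal.

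With this in hand, (1)$\Leftrightarrow$(3) together with the uniqueness $\beta=\beta_{\mu,\nu}$ in (3) is just Corollary \ref{Sonineequiv}, while (2)$\Rightarrow$(1) is Theorem \ref{suff1}. The substantive step is (1)$\Rightarrow$(2): Theorem \ref{not1} confines $\nu$ to $(\{0,d/2,\ldots,(q-1)d/2\}-\mathbb N_0)\cup E_0$, and Theorem \ref{reell_komplex_Lemma} independently yields $\Re\nu\geq 0$. A brief case check (handling $d=1$ and $d=2$ separately) shows that any $jd/2-l$ in the shifted discrete part with $l\geq 1$ and $jd/2-l\geq 0$ is automatically of the form $j'd/2$ with $0\leq j'\leq q-1$, so the intersection collapses to $\{0,d/2,\ldots,(q-1)d/2\}\cup E_0=W_{q,d}$.

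For the real / probability-measure variant I would argue in parallel. Implication (2)$\Rightarrow$(1) is the positive part of Theorem \ref{suff1}, and (1)$\Rightarrow$(2) combines the same case check with the second half of Theorem \ref{reell_komplex_Lemma}, which gives $\nu\in[0,\infty[$; intersecting places $\nu$ in the real Wallach set. The passage between a positive measure $\beta_{\mu,\nu}$ and a probability measure $\beta$ in (3) uses one normalization remark: by Theorem \ref{distribution-fortsetzung}(2), $\nu\mapsto\beta_{\mu,\nu}(1)$ is analytic on $E_k$ and equals $1$ on $E_0$, hence equals $1$ throughout $E_k$. So a positive $\beta_{\mu,\nu}$ is automatically a probability measure, yielding (3); conversely, any probability $\beta$ satisfying the Sonine identity must coincide with $\beta_{\mu,\nu}$ by the uniqueness in Corollary \ref{Sonineequiv}, forcing $\beta_{\mu,\nu}$ to be positive.

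The only real obstacle I anticipate is the discrete intersection step in (1)$\Rightarrow$(2): one has to verify that the combined restrictions from Theorems \ref{not1} and \ref{reell_komplex_Lemma} cut out exactly $W_{q,d}$ and nothing more. This is an elementary arithmetic check using the explicit constraints $d\in\{1,2\}$ and $0\leq j\leq q-1$, but it is the single conceptual point that makes the corollary sharp in this range and also explains why the present argument does not extend to quaternionic or octonionic cones.
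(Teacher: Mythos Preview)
Your approach is essentially identical to the paper's: the paper also obtains (1)$\Rightarrow$(2) by combining Theorem \ref{not1} with Theorem \ref{reell_komplex_Lemma}, and derives the remaining implications and uniqueness from Corollary \ref{Sonineequiv}, Theorem \ref{suff1}, and Theorem \ref{injectivity}(2). One small slip: your inequality chain $2\mu_0+1\leq 2q-1<\mu_0+q+3/2$ fails at the last step for $d=1$ and $q\geq 4$ (then $\mu_0+q+3/2=(3q+2)/2\leq 2q-1$), but the intended conclusion $2\mu_0+1<\mu_0+kq+3/2$ is correct directly since for $d\in\{1,2\}$ and $k\geq 1$ one has $\mu_0=d(q-1)/2\leq q-1<kq+1/2$.
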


\begin{proof}  In both cases, 
implication  (1)$\Rightarrow $(2) follows from Theorem \ref{not1} in combination with Theorem \ref{reell_komplex_Lemma}. The remaining parts are immediate from 
Corollary \ref{Sonineequiv}, Theorem \ref{suff1} and Theorem \ref{injectivity}(2). 

\end{proof}

Corollary \ref{summary1} implies in particular that for $q>1$ and sufficiently large 
 $\mu>0$, there exist indices $\nu >0$ such that  $\mathcal J_{\mu+\nu}$
admits no positive integral representation with respect to $\mathcal J_{\mu}$. So there exists no Sonine-type formula in these cases.
This is a surprising contrast compared to the one-variable case.

\begin{remark} The Jack polynomials $C_\lambda^\alpha$ have non-negative  coefficients in their expansion with respect to the monomial symmetric functions(\cite{KS}). In view of formula \eqref{power-j},
this implies that 
\begin{equation}\label{Bessel_neg}
\mathcal J_\mu(-r) >0 \quad\text{for } \mu > \mu_0 \text{ and all } r\in\overline\Omega.\end{equation}
Similar to an argument in the
  appendix of \cite{S}, this observation together with Theorem \ref{distribution-fortsetzung}(3) and
 identity \eqref{beta_1} leads for $d=2$ to an alternative 
 proof that for $\mu>\mu_0+kq+1$ and indices $\nu\in [0,\infty[$ which do not belong to the Wallach set,
the distribution $\beta_{\mu,\nu}$ cannot be a positive measure.
In fact, otherwise identity \eqref{beta_1} would
imply that $\beta_{\mu,\nu+l}$ is a negative measure for $l=1$ or $l=2$, because  the product on the right side of formula \eqref{beta_1} will be negative for either $\nu$ or $\nu +1$. (Here $d=2$ is relevant).  On the other hand, Theorem \ref{distribution-fortsetzung}(3) immediately implies that
\[ \mathcal J_{\mu+\nu +l}(-r) = \int_{\overline\Omega_e} \mathcal J_\mu\bigl(-P(\sqrt{s})r\bigr) d\beta_{\mu,\nu+l}(s) \]
for all $r\in \overline\Omega$, in contradiction to \eqref{Bessel_neg}.

\end{remark}


\end{document}